\documentclass{amsproc}

\usepackage{amsmath}
\usepackage{amsthm}
\usepackage{amsbsy}
\usepackage{amsopn}
\usepackage{amsmath,amscd}
\usepackage{amssymb}
\usepackage{amsfonts}
\usepackage{multirow}
\usepackage[official ]{eurosym}
\usepackage{xcolor}
\pagecolor{white}

\newtheorem{theorem}{Theorem}[section]
\newtheorem{lemma}[theorem]{Lemma}

\theoremstyle{definition}
\newtheorem{definition}[theorem]{Definition}
\newtheorem{example}[theorem]{Example}

\theoremstyle{remark}
\newtheorem{remark}[theorem]{Remark}

\newcommand{\cH}{\mathcal{H}}

\newcommand{\cT}{\mathcal{T}}

\newcommand{\bN}{\mathbf{N}}

\newcommand{\bR}{\mathbf{R}}

\newcommand{\bZ}{\mathbf{Z}}

\newcommand{\fF}{\mathfrak{F}}

\usepackage{graphicx}

\usepackage{tikz} 
\usetikzlibrary{arrows,decorations.pathmorphing,backgrounds,fit,positioning,shapes.symbols,chains,calc}
\usepackage[all]{xy}
\xyoption{matrix}
\xyoption{arrow}
\usepackage[hmargin=3cm,vmargin=3cm]{geometry}
\usepackage{setspace,kantlipsum}
\tikzset{help lines/.style={step=#1cm,very thin, color=gray},
help lines/.default=.5} 

\usepackage{booktabs}

\newtheorem{proposition}[theorem]{Proposition}
\newtheorem{corollary}[theorem]{Corollary}

\theoremstyle{definition}

\newtheorem{inductive lemma}[theorem]{Inductive Lemma}

\newtheorem{warning}[theorem]{Warning}

\newcommand{\coker}{\mathrm{coker}}
\numberwithin{equation}{section}

\begin{document}

\title{Framed bordism of Lagrangian homotopy spheres via generating functions}


\author{Daniel \'Alvarez-Gavela}
\address{}
\email{dgavela@brandeis.edu}


\subjclass[2020]{Primary }

\date{\today}

\begin{abstract}
In this note we combine a result of B\"okstedt and Waldhausen with the existence theorem for generating functions of tube type for nearby Lagrangian homotopy spheres due to Abouzaid, Courte, Guillermou and Kragh to obtain a restriction on the smooth structure of nearby Lagrangian homotopy spheres. Concretely, it is proved that if a homotopy $n$-sphere $L$ admits a Lagrangian embedding in the cotangent bundle of some other homotopy $n$-sphere $M$, then the difference $[L]-[M]$ in $\theta_n/bP_{n+1}$ is a multiple of the Hopf element $\eta \in \pi^1_s$. In particular it follows that $[L]-[M]$ is 2-torsion in $\theta_n/bP_{n+1}$, hence if $n$ is even then $L\# L$ is diffeomorphic to $M \# M$. As another application, we deduce that if a homotopy $8$-sphere $L$ admits a Lagrangian embedding in $T^*S^8$, then $L$ is diffeomorphic to $S^8$. The results presented in this note are subsumed by a joint work with Abouzaid, Courte and Kragh which treats the general case in which $M$ is an arbitrary smooth manifold. When $M$ is a homotopy sphere the situation is significantly simpler and the purpose of this note is to give a concise exposition of the main result in this special case.
\end{abstract}

\maketitle

 \section{Introduction}
  
 \subsection{Nearby Lagrangian homotopy spheres}\label{sec:laghom}

Let $L \subset T^*S^n$ be an embedded Lagrangian homotopy sphere in the cotangent bundle of the standard sphere. It is not known whether $L$ must be diffeomorphic to the standard sphere. However, there are some known restrictions on the class of $L$ in the finite abelian group $\Theta_n/bP_{n+1}$ of homotopy spheres modulo those which bound a parallelizable manifold, as defined by Kervaire and Milnor in \cite{KM}. 

The first result in this direction was established by Abouzaid \cite{A}, who proved that whenever the dimension $n$ is congruent to $1$ modulo $4$, any Lagrangian homotopy sphere  $L \subset T^*S^n$ must bound a parallelizable manifold. The bordism is constructed from an $(n+1)$-dimensional moduli space of pseudo-holomorphic disks. This technique was further developed by Ekholm and Smith in \cite{ES} and by Ekholm, Kragh and Smith in \cite{EKS}, who in particular proved that the same conclusion holds whenever $n$ is congruent to $3$ modulo $4$. 

Before we state the main result of this note, which is given in Theorem \ref{thm:structural} below, let us formulate some concrete consequences which further constrain the class of $L$ in $ \Theta_n/bP_{n+1}$. 

\begin{corollary}\label{cor:C}
Let $L \subset T^*S^n$ be a Lagrangian homotopy sphere. Then $L \# L$ bounds a parallelizable manifold. \end{corollary}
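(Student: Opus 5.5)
This corollary should follow from the main result, Theorem~\ref{thm:structural}, stated below, together with elementary facts about the stable stems. Take $M=S^n$ in Theorem~\ref{thm:structural}, so that $[M]=0$. The theorem then says that the class $[L]\in\Theta_n/bP_{n+1}$ lies in the image of multiplication by the Hopf element $\eta\in\pi^1_s$. I would make this precise through the Kervaire--Milnor embedding
\[
\Theta_n/bP_{n+1}\hookrightarrow \pi^s_n/\operatorname{im}(J).
\]
Under this map, $[L]$ is represented by $\eta\cdot x$ for some $x\in\pi^s_{n-1}$, taken modulo the image of $J$.

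The key step is the relation $2\eta=0$ in $\pi^s_1\cong\bZ/2$. Since the composition product on $\pi^s_*$ is bilinear, we get $2(\eta x)=(2\eta)x=0$. So $2[L]$ maps to zero in $\pi^s_n/\operatorname{im}(J)$. Because the Kervaire--Milnor map is injective, $2[L]=0$ in $\Theta_n/bP_{n+1}$.

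The group operation on $\Theta_n$ is connected sum, so $2[L]=[L\# L]$. The vanishing of this class in $\Theta_n/bP_{n+1}$ says exactly that $L\# L$ represents an element of $bP_{n+1}$, which by definition means it bounds a parallelizable manifold. This is the statement of the corollary.

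The only point that needs care is pinning down what ``a multiple of $\eta$'' means in Theorem~\ref{thm:structural}. Concretely, one needs that the difference class $[L]-[M]$ is realized as $\eta$ times a stable class, compatibly with the Kervaire--Milnor identification, and that this is a group-level statement and not merely a statement about sets. Once that is granted, the argument is just bilinearity of the product together with $2\eta=0$. All the substantive work lives in Theorem~\ref{thm:structural}, which combines the tube-type generating functions of Abouzaid--Courte--Guillermou--Kragh with the B\"okstedt--Waldhausen result.
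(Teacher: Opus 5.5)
Your proposal is correct and follows the paper's route. The paper treats the corollary as immediate from Theorem~\ref{thm:structural} with $\Sigma_1=S^n$: the class of $L$ lies in $H_n$, which is 2-torsion because $2\eta=0$, and $\Theta_n/bP_{n+1}\subset\coker(J_n)$. Your concern about a group-level statement is already answered by how the theorem is formulated, since $H_n$ is defined as the image of the homomorphism $\pi_{n-1}^s\to\pi_n^s\to\coker(J_n)$.
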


In other words, the class of $L$ in $\Theta_n/bP_{n+1}$ is 2-torsion. The conclusion of Corollary \ref{cor:C} is nontrivial when the group  $\Theta_n/bP_{n+1}$ is not 2-torsion. According to \cite{KM}, the group $\Theta_n$ of homotopy $n$-spheres (up to h-cobordism) is trivial for $n<7$, and for $7 \leq n \leq 20$  the isomorphism class of the group $\Theta_n/bP_{n+1}$ is tabulated below: \\

\begin{center}
\begin{tabular}{|c|c||c|c|} 
 \hline
$n$ & $\Theta_n/bP_{n+1}$ &$ n$ &  $\Theta_n/bP_{n+1}$   \\ [0.5ex] 
 \hline\hline
7&  $0$& 14 &  $\bZ/2$  \\ 
 \hline
8& $\bZ/2$ & 15 & $\bZ/2$ \\
 \hline
9&  $\bZ/2 \oplus \bZ/2$  & 16 & $\bZ/2$ \\
 \hline
10 & $\bZ/2 \oplus\bZ/3$ & 17 & $\bZ/2 \oplus \bZ/2 \oplus \bZ/2$  \\
 \hline
11  & 0 & 18 & $\bZ/8 \oplus \bZ/2$ \\  
\hline
12  & 0 & 19 &   $\bZ/2$ \\  
\hline
13  & $\bZ/3$ & 20 &  $\bZ/24$  \\   [1ex] 
 \hline
\end{tabular}
\end{center}

\vspace{0.5cm}

From the table above we see that  $\Theta_n/bP_{n+1}$ is certainly not always 2-torsion, with $n=10$ being the smallest dimension in which odd torsion appears. We also recall that for $n$ even we have $bP_{n+1}=0$,  so if $L \subset T^*S^n$ is a Lagrangian homotopy sphere we have that $L\# L$ is trivial in $\Theta_n$ itself and hence by the h-cobordism theorem we deduce:

\begin{corollary}\label{cor:F} Let $L \subset T^*S^n$ be a Lagrangian homotopy sphere of even dimension $n \neq 4$. Then $L \# L$ is diffeomorphic to the standard sphere. \end{corollary}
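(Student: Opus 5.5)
Corollary \ref{cor:F} follows from Corollary \ref{cor:C} by classical results of Kervaire--Milnor and Smale. Let $L \subset T^*S^n$ be a Lagrangian homotopy sphere with $n$ even and $n \neq 4$. By Corollary \ref{cor:C}, the homotopy sphere $L \# L$ bounds a parallelizable manifold, so its class in $\Theta_n$ lies in the subgroup $bP_{n+1}$. The first step is to recall from \cite{KM} that $bP_{n+1} = 0$ when $n$ is even, i.e.\ when $n+1$ is odd. The reason is that a homotopy sphere $\Sigma^{2k}$ bounding a parallelizable $W^{2k+1}$ can be handled by surgery on $W$ to make it contractible: the surgery obstruction in odd dimensions vanishes in the relevant case. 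Hence $[L \# L] = 0$ in $\Theta_n$.

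The second step is to translate triviality in $\Theta_n$ into a diffeomorphism. Recall that $\Theta_n$ is the group of h-cobordism classes of homotopy $n$-spheres. Vanishing of $[L\# L]$ therefore means that $L \# L$ is h-cobordant to $S^n$.

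\begin{itemize}
\item For $n \geq 5$, Smale's h-cobordism theorem shows that h-cobordant simply connected manifolds are diffeomorphic, so $L \# L \cong S^n$.
\item For the remaining even dimension $n = 2$, every homotopy $2$-sphere is diffeomorphic to $S^2$ by the classification of surfaces, so the conclusion holds trivially.
\end{itemize}

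The case $n=4$ is excluded precisely because the smooth h-cobordism theorem, and the smooth Poincar\'e conjecture, are unknown there.

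The only real content is Corollary \ref{cor:C}, which is assumed here. The remaining step that deserves care is citing the vanishing of $bP_{n+1}$ for even $n$ correctly. It is the standard statement in \cite{KM} that $bP_{2k+1}=0$ for all $k$, so no genuine obstacle is expected.
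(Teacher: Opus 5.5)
Your proposal is correct and is essentially the paper's own argument: Corollary~\ref{cor:C}, combined with $bP_{n+1}=0$ for $n$ even, gives $[L\# L]=0$ in $\Theta_n$, and the h-cobordism theorem then upgrades this to a diffeomorphism. Your separate handling of $n=2$ makes explicit a low-dimensional case that the paper leaves implicit.
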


Going back to the case $n=10$, note that there are 6 different smooth structures on the 10-sphere and Corollary \ref{cor:C} rules out all the possible exotic structures for a Lagrangian homotopy sphere $L \subset T^*S^{10}$ except for one. Another example is $n=18$, where there are 16 different smooth structures and Corollary \ref{cor:C} rules out all but 3 of the exotic structures.  

There are also examples of odd $n$  for which Corollary \ref{cor:C} gives a nontrivial conclusion, the smallest being $n=13$, but of course when $n$ is odd we already know that $L$ itself bounds a parallelizable manifold from \cite{A, ES, EKS}.

In some special dimensions we can show that any homotopy sphere $L \subset T^*S^n$ must itself bound a parallelizable manifold. Let $\pi_n^s$ be the $n$-th stable homotopy group of spheres, $J_n: \pi_n O \to \pi_n^s$ the $J$-homomorphism and $\text{im}(J_n)\subset \pi_n^s$ its image. We denote by $\eta \in \pi_1^s$ the image of the Hopf fibration $S^3 \to S^2$ under the stabilization map $\pi_3(S^2) \to \pi_1^s$. Recall that $\pi_*^s=\bigoplus_i \pi^s_i$ is a ring and consider the map $\pi_{n-1}^s \to \pi_n^s$ given by multiplication with $\eta$.
 \begin{corollary}\label{cor:A}
Let $n$ be such that $\eta \cdot \pi_{n-1}^s \subset \text{im}(J_n)$. Then any Lagrangian homotopy sphere $L \subset T^*S^n$ bounds a parallelizable manifold. If moreover $n$ is even and $\neq 4$, then $L$ is diffeomorphic to $S^n$. 
 \end{corollary}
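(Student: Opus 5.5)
The plan is to derive Corollary \ref{cor:A} from the main structural result (Theorem \ref{thm:structural}, announced in the abstract), using the Kervaire--Milnor description of $\Theta_n/bP_{n+1}$. That theorem says that if $L$ is a Lagrangian homotopy sphere in $T^*M$ with $M$ a homotopy sphere, then $[L]-[M]$ is a multiple of $\eta$. Since we take $M=S^n$, we have $[M]=0$. Concretely, I read the theorem as stating the following. The image of $[L]$ under the Kervaire--Milnor injection
\[
\Theta_n/bP_{n+1} \hookrightarrow \pi_n^s/\text{im}(J_n)
\]
is represented by an element of the form $\eta\cdot x$ with $x \in \pi_{n-1}^s$. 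This injection sends a homotopy sphere to the class of its stably framed Pontryagin--Thom element, modulo the indeterminacy coming from the choice of framing.

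The steps, in order, are as follows.

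\textbf{Step 1.} Recall the Kervaire--Milnor map. A homotopy sphere $\Sigma$ is stably parallelizable, and any stable framing of $\Sigma$ gives a class in $\pi_n^s$ via the Pontryagin--Thom construction. Changing the framing changes this class by an element of $\text{im}(J_n)$. This gives a homomorphism $\Theta_n \to \pi_n^s/\text{im}(J_n)$ whose kernel is exactly $bP_{n+1}$, because $\Sigma$ bounds a parallelizable manifold if and only if some framing of $\Sigma$ is framed null-bordant.

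\textbf{Step 2.} Apply Theorem \ref{thm:structural} to obtain $[L] \equiv \eta x \pmod{\text{im}(J_n)}$ for some $x\in\pi_{n-1}^s$. By hypothesis $\eta x \in \text{im}(J_n)$, so the class of $[L]$ in $\pi_n^s/\text{im}(J_n)$ vanishes. By injectivity, $[L]=0$ in $\Theta_n/bP_{n+1}$, which means $L$ bounds a parallelizable manifold.

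\textbf{Step 3.} For $n$ even we have $bP_{n+1}=0$, so $[L]=0$ in $\Theta_n$. By the h-cobordism theorem for $n\ge 5$, and trivially for $n=2$, this gives $L\cong S^n$. The case $n=4$ is excluded since the h-cobordism theorem is unavailable there.

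The only step that needs care is interpreting Theorem \ref{thm:structural} correctly. One must check that the phrase ``$[L]-[M]$ is a multiple of $\eta$'' is stated in $\pi_n^s/\text{im}(J_n)$, via the Kervaire--Milnor embedding, and that the multiple $\eta x$ is allowed to be an arbitrary element of $\eta\cdot\pi_{n-1}^s$. Once that is settled, the corollary follows formally from injectivity of the Kervaire--Milnor map. The same argument gives Corollary \ref{cor:C}, since $2\eta=0$ in $\pi_1^s$.
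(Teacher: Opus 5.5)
Your proposal is correct and follows the paper's own route. The paper treats Corollary \ref{cor:A} as an immediate consequence of Theorem \ref{thm:structural} with $\Sigma_1=S^n$: the hypothesis forces $H_n=0$, which gives $[L]=0$ in $\Theta_n/bP_{n+1}\subset \coker(J_n)$, and for even $n\neq 4$ one adds $bP_{n+1}=0$ together with the h-cobordism theorem.
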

 
 
Of course the hypothesis of Corollary \ref{cor:A} will not hold in general, and verifying whether it is satisfied for a given $n$ is a nontrivial problem in stable homotopy theory. Note that since $\eta$ is 2-torsion we only need to look at the 2-primary part of the stable homotopy groups of spheres. The relevant computations, which are known for at least $n \leq 60$, are succinctly summarized in the beautiful picture \cite{H} on Allen Hatcher's website. For a reference to any mathematical assertions related to \cite{H} which are made below, the reader may consult Ravenel's book \cite{R}.


For example, when $n=8$ the image of $J_8$  is generated by $\sigma \eta$, which from \cite{H} also generates $\pi^7_s \cdot \eta$, and hence Corollary \ref{cor:A} applies to yield the following conclusion which did not follow from Corollary \ref{cor:C} or \ref{cor:F} since $\Theta_8 \simeq \bZ/2$ is 2-torsion.

\begin{example}\label{ex:n=8}
If $L \subset T^*S^8$ is a Lagrangian homotopy sphere, then $L$ is diffeomorphic to $S^8$.
\end{example}

One may find other examples of $n$ to which Corollary \ref{cor:A} applies by direct inspection of \cite{H}, including the even values $n=14,20,28,30,...$ However, when $n=10$, the image of $J_{10}$ is trivial, whereas the image of $\eta$ generates the 2-primary part of $\pi_{10}^s \simeq \bZ/6$, so Corollary \ref{cor:A} still cannot exclude the remaining possibility for the exotic smooth structure of a Lagrangian homotopy sphere $L \subset T^*S^{10}$.  In particular,
combining Example \ref{ex:n=8} with \cite{A} (which addresses the case $n=9$) it follows that $n=10$ is the smallest dimension for which it is currently not known whether a Lagrangian homotopy sphere $L \subset T^*S^n$ must bound a parallelizable manifold.

\subsection{Main theorem}
Recall the Pontryagin-Thom isomorphism between the group $\Omega_n^{fr}$ of framed bordism classes of closed framed manifolds and the $n$-th stable homotopy group of spheres $\pi_n^s$. Following Kervaire and Milnor \cite{KM}, any homotopy sphere $\Sigma \in \Theta_n$ is stably parallelizable. Further, any two framings of $\Sigma$ differ by an element of $\pi_n O$, so the framed bordism class of $\Sigma$ is well-defined modulo the image of the $J$-homorphism. This leads to a homomorphism
\[\Theta_n\to \coker(J_n),\] 
whose kernel is $bP_{n+1}$. Hence $\Theta_n/bP_{n+1}$ is isomorphic to a subgroup of $\text{coker}(J_n)$. In fact it is isomorphic to either $\text{coker}(J_n)$ or to an index 2 subgroup of $\text{coker}(J_n)$, but we won't need this fact in what follows.
Let $H_n$ denote the image of the composition 
\[\pi_{n-1}^s\overset{\cdot \eta}{\longrightarrow} \pi_n^s \to \coker(J_n).\]
Note that $H_n$ is 2-torsion since $2\eta=0$ in $\pi_1^s \simeq \mathbf Z/2$. The main result of this article is the following.

\begin{theorem}\label{thm:structural}
Let $\Sigma_0$ and $\Sigma_1$ be homotopy spheres such that $\Sigma_0$ admits a Lagrangian embedding into $T^*\Sigma_1$. Then the difference between $\Sigma_0$ and $\Sigma_1$ in the group $\Theta_n/bP_{n+1}$ lies in the subgroup $H_n\subset \coker (J_n)$.
\end{theorem}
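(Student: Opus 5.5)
We combine the existence theorem of Abouzaid, Courte, Guillermou and Kragh with the B\"okstedt--Waldhausen result on the stable homotopy type of the tube. We use the ACGK theorem in the following form: the Lagrangian $\Sigma_0 \subset T^*\Sigma_1$ is Hamiltonian isotopic to one admitting a generating function of tube type $f\colon \Sigma_1\times \bR^N \to \bR$. Tube type means that outside a compact set $f$ agrees with a nondegenerate quadratic form $Q$ in the fibre variables, after a stabilization. Then $\Sigma_0$ is identified with the fibrewise critical locus
\[\Sigma_f=\{(q,v): \partial_v f(q,v)=0\}\subset \Sigma_1\times \bR^N,\]
which is cut out transversally by the fibre derivative.

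The first step is to extract a framed manifold. The map $\partial_v f\colon \Sigma_1\times\bR^N\to \bR^N$ is transverse to $0$. So $\Sigma_f\cong\Sigma_0$ acquires a normal framing in $\Sigma_1\times \bR^N$, pulled back from $T_0\bR^N$. Outside a compact set we have $\partial_v f=\partial_v Q$, which is a linear isomorphism in $v$. Therefore $\partial_v f$ is properly homotopic, relative to infinity, to the map $(q,v)\mapsto Av$, whose zero set is $\Sigma_1\times\{0\}$ with the standard normal framing. Concretely, the straight-line homotopy $\partial_v f_t$ with $f_t = (1-t)f+tQ$ has no zeros outside a compact set. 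Pontryagin--Thom then shows that $\Sigma_0$ and $\Sigma_1$ are framed bordant inside $\Sigma_1\times\bR^N\times[0,1]$, where $\Sigma_1$ carries a stable framing pulled back from a chosen one, up to a twist.

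The naive argument above would give $[\Sigma_0]=[\Sigma_1]$ in $\coker(J_n)$, which is too strong. The subtlety is that the tangential framing of $\Sigma_0$ is not the one just constructed. The normal framing of $\Sigma_f$ combines with the stable trivialization of $T\Sigma_1$ only after composing with the Gauss map of the Lagrangian. This map $\Sigma_0\to U/O$ is stably trivial when $\Sigma_0$ has a tube-type generating function. It factors through a map $\Sigma_0\to \Omega^\infty\Sigma^\infty$-type data, namely a map from the tube $\Sigma_1\to \mathcal{T}$ to the space of tube-type functions, or the Waldhausen Whitehead space. The correction to the framing is therefore a class measured by the map $\Sigma_0 \simeq S^n\to \mathcal{T}$ composed with $\mathcal{T}\to BO$. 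The second step is to compute this class.

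We invoke B\"okstedt--Waldhausen: the stable map $\Omega^\infty S^\infty\to \mathcal T\to$ (the relevant $O$-type target) is, on the bottom cell, given by $\eta$. More precisely, the map $Q(S^0)\to \Omega \mathrm{Wh}^{\mathrm{Diff}}(*)$, or the map into $G/O$, induces on $\pi_{n-1}$ the composite of multiplication by $\eta$ with the projection $\pi_n^s\to\coker J_n$. It follows that the discrepancy between the two framings of $\Sigma_0$ changes the framed bordism class by an element of the form $\eta\cdot x$ with $x\in\pi^s_{n-1}$. Hence $[\Sigma_0]-[\Sigma_1]\in H_n$.

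The main obstacle is this third step. It requires identifying precisely the framing twist induced by the generating function with the B\"okstedt--Waldhausen map, including the shift by one degree that produces $x\in\pi_{n-1}^s$ rather than $\pi_n^s$. I would first treat the tube-type space as a group-like loop space and compute its first stable $k$-invariant via the quadratic-form stabilization. Framings differing by elements of $\pi_nO$ only change classes by $\operatorname{im} J_n$, so all the bookkeeping can be done in $\coker(J_n)$.
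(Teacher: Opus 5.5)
\textbf{There is a genuine gap.} Your first step misstates the existence theorem, and this is where the proposal goes wrong. ACGK does not give a generating function on the trivial bundle $\Sigma_1\times\bR^N$ that is quadratic at infinity. It gives one on \emph{some} tube bundle $W\to\Sigma_1$, whose fibres are tubes $T\subset\bR^{N+1}$ (with $f=x_{N+1}$ at infinity and $\partial T$ a regular level set). The classifying element $\alpha\in\pi_n\cT_\infty$ of $W$ is not controlled. A generating function quadratic at infinity corresponds to the trivial rigid tube bundle, and its existence is exactly what is currently out of reach.

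Your step 1 is not ``naive'' in any defective way. Given such a generating function, the normal framing of $\Sigma_f$ pulled back from $T_0\bR^N$, together with a framing of $\Sigma_1$, is a legitimate stable framing of $\Sigma_0$. The coset in $\coker(J_n)$ does not depend on which stable framing is used. So your homotopy would genuinely prove $[\Sigma_0]=[\Sigma_1]$ in $\Theta_n/bP_{n+1}$; this is a special case of Proposition~\ref{cor:weird}. The framing ``subtlety'' you introduce to avoid this conclusion is therefore spurious. The Gauss map $\Sigma_0\to U/O$ plays no role in the bordism computation, and there is no map $\cT\to BO$ (only the rigid tube map $BO\to\cT_\infty$, going the other way).

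The missing idea is that the discrepancy comes from the nontriviality of the tube bundle itself. The fibrewise gradient of $f$ is a fibrewise vector field that points outward along $\partial T$ and equals $\partial_{N+1}$ at infinity. Hence $(\Sigma_0,\fF_f)$ equals the framed bordism class $\beta(W,\fF)$ of the zero set of any such field. Write $W$ (stably) as the fibre connected sum of a bundle $V\to S^n$ classified by $\alpha$ with the trivial bundle over $\Sigma_1$. This gives $\beta(W,\fF)=D\alpha+(\Sigma_1,\fF)$. Here $D\alpha\in\pi_nG\cong\pi_n^s$ is the Waldhausen derivative, obtained by restricting a nonvanishing outward extension of the normal field to the hyperplane $\{x_{N+1}=-1\}$.

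The B\"okstedt--Waldhausen input is then that $D$ factors as $\cT_\infty\to BG\to G$. The first map forgets to the underlying spherical fibration, and the second induces multiplication by $\eta$ on $\pi_n$. The degree shift you were looking for is just $\pi_nBG\cong\pi_{n-1}^s$. What you attribute to B\"okstedt--Waldhausen (an $\eta$ on a bottom cell of a map $Q(S^0)\to\Omega\mathrm{Wh}^{\mathrm{Diff}}(*)$ or into $G/O$) is not this statement. Computing a $k$-invariant of a space of tube-type functions does not produce it either. So the step you correctly identify as the crux has no working mechanism in your proposal.
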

 
All the corollaries of Section \ref{sec:laghom} are immediate consequences of Theorem \ref{thm:structural}. 

The question of Lagrangian embeddings is related to the question of whether the symplectic topology of a cotangent bundle remembers the smooth structure of the base. Indeed, a symplectomorphism $T^*\Sigma_0 \simeq T^*\Sigma_1$ yields a Lagrangian embedding $\Sigma_0 \subset T^*\Sigma_1$, namely the image of the zero section. From Theorem \ref{thm:structural} we deduce:

\begin{corollary} Let $\Sigma_0$ and $\Sigma_1$ be homotopy spheres such that $T^*\Sigma_0$ is symplectomorphic to $T^*\Sigma_1$. Then the difference between $\Sigma_0$ and $\Sigma_1$ in the group $\Theta_n/bP_{n+1} $ lies in the subgroup $H_n\subset \text{coker}(J_n)$. 
 \end{corollary}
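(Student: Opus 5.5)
Following the abstract, I plan to combine the existence of tube-type generating functions for nearby Lagrangian homotopy spheres (Abouzaid--Courte--Guillermou--Kragh) with the Böksted--Waldhausen theorem on the stable homotopy type of the space of tube functions. Throughout, $n \geq 5$, since for smaller $n$ the group $\Theta_n$ is trivial.

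\textbf{Step 1: produce a generating function.} By the ACGK existence theorem, the Lagrangian $\Sigma_0 \subset T^*\Sigma_1$ is Hamiltonian isotopic to one generated by a function $f\colon \Sigma_1 \times \bR^N \to \bR$ of tube type. This means that outside a compact set, $f$ agrees with a fixed nondegenerate quadratic form $Q$ on each fiber $\bR^N$. The Lagrangian is then identified with the fiberwise critical locus
\[
\Sigma_f=\{(x,v): \partial_v f(x,v)=0\},
\]
and the fiber derivative $\partial_v f\colon \Sigma_1\times\bR^N\to\bR^N$ is transverse to $0$. Hence $\Sigma_0\cong \Sigma_f$ is cut out as the preimage of a regular value, and its normal bundle is trivialized by $\bR^N$. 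Combined with the stable framing of $\Sigma_1$, this gives $\Sigma_0$ a stable framing. The framed bordism class of $\Sigma_0$, computed in $\Sigma_1 \times \bR^N$ and then pushed to $\bR^{n+N}$ via a framing of $\Sigma_1$, is the Pontryagin--Thom class of the composite
\[
(\Sigma_1\times \bR^N)^+ \to S^N,
\]
and this determines $[\Sigma_0]\in \coker(J_n)$.

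\textbf{Step 2: identify the class as a family of tube functions.} The fiberwise map $\partial_v f$ is a family, parametrized by $\Sigma_1$, of maps $\bR^N\to\bR^N$ that equal $\partial Q$ near infinity. I will view $\Sigma_1 \to \mathcal T$ as a map into the space of tube functions, where $\mathcal T$ is the space of functions on $\bR^N$ agreeing with $Q$ at infinity, stabilized in $N$. Choosing a point $x_0$ and a disk $D^n \subset \Sigma_1$, the difference $[\Sigma_0]-[\Sigma_1]$ is the image of the class in $\pi_n$ of this family relative to the constant family $Q$, under the map $\mathcal T \to \Omega^\infty S^\infty$, i.e. the map sending a function to the Pontryagin--Thom map of its gradient.

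More precisely, I will decompose $\Sigma_1 = D^n\cup D^n$ and deform $f$ to be standard, i.e. equal to $Q$, over one disk, which is possible since $\mathcal T$ is connected. Then $[\Sigma_0]-[\Sigma_1]$ is the image of an element $\alpha \in \pi_n(\mathcal T)$ under
\[
\pi_n(\mathcal T) \to \pi_n(QS^0)=\pi_n^s .
\]
Here the $J$-indeterminacy comes from the choice of framing of $\Sigma_1$ and from the trivialization of the disk.

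\textbf{Step 3: apply Böksted--Waldhausen.} By Böksted--Waldhausen, the stabilized space of tube functions is related to $\Omega^\infty S^\infty$. Specifically, its image in stable homotopy under the gradient map factors as the Hopf map, up to image of $J$: the map $\mathcal T\to QS^0$ factors through $\Omega QS^0 \xrightarrow{\eta} QS^0$, modulo a contribution from $O$ which corresponds to quadratic forms and accounts for $\operatorname{im} J$. Hence the image of $\alpha$ lies in $\eta\cdot \pi^s_{n-1} + \operatorname{im}(J_n)$, and so the difference lies in $H_n$.

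\textbf{Main obstacle.} I expect the hard part to be Step 3: precisely matching the Pontryagin--Thom map of the fiberwise critical set with the Böksted--Waldhausen description, and checking that the $O$-part contributes only $\operatorname{im} J$. I would first verify the claim on the level of $\pi_n$ using the fibration $\mathcal T \to$ (space of Morse data), reducing it to the known identification of Waldhausen's map $h$ with $\eta$.
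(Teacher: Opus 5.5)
\textbf{Gap in Step 1, which propagates through the rest.} The existence theorem of Abouzaid--Courte--Guillermou--Kragh does not give a generating function on the trivial bundle $\Sigma_1\times\bR^N$ that agrees with a fixed quadratic form at infinity. The paper stresses that existence of such generating functions, which the nearby Lagrangian conjecture predicts, is currently out of reach. Theorem~\ref{thm:existence} gives only a function $f\colon W\to\bR$ on \emph{some} tube bundle $W\to\Sigma_1$. Its fibres are codimension-zero submanifolds $T_z\subset\bR^{N+1}$, each isotopic to a half-space with a handle attached, varying non-rigidly in $z$, with $\partial T_z$ a regular level set and $f=x_{N+1}$ at infinity.

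Your own Step 2 exposes the problem. The space of functions on $\bR^N$ equal to $Q$ outside a compact set is convex, hence contractible, so $\pi_n(\mathcal T)=0$ for your $\mathcal T$. Concretely, the straight-line homotopy from $\partial_v f$ to the $x$-independent $\nabla Q$, as in Lemma~\ref{lem:indep}, would show $(\Sigma_0,\fF_f)=(\Sigma_1,\fF)$. That would mean every nearby Lagrangian homotopy sphere equals its base in $\Theta_n/bP_{n+1}$. This statement is open (already for $n=10$), and it would make Step 3 vacuous. All the nontrivial information lies in the topology of the varying fibres. That is, it lies in the classifying element $\alpha\in\pi_n\cT_\infty$ of $W$ in Waldhausen's space of stable tubes, which is not a space of functions.

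\textbf{What the argument needs instead.} First, the one-line reduction you omitted: the symplectomorphism carries the zero section of $T^*\Sigma_0$ to a Lagrangian embedding $\Sigma_0\subset T^*\Sigma_1$. This is all the corollary adds to Theorem~\ref{thm:structural}. After that, you need the tube-bundle version of your Steps 2--3:
\begin{itemize}
\item Using $\nabla_T f$ as the fibrewise vector field gives $(\Sigma_0,\fF_f)=\beta(W,\fF)$.
\item Writing $W$ as the fibre connected sum of $\rho^*W$ over $S^n$ with the trivial bundle gives $\beta(W,\fF)=D\alpha+(\Sigma_1,\fF)$. Here $D\alpha\in\pi_nG\cong\pi^s_n$ is the Waldhausen derivative: extend the outward normal of $\partial T_z$ to a nonvanishing field on $\{x_{N+1}\ge-1\}$ and restrict to $\{x_{N+1}=-1\}$. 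Pontryagin--Thom identifies it with $\beta(V,\fF^0)$.
\item $D$ factors as $\cT_\infty\to BG\to G$ through the forgetful map to topological tubes, and B\"okstedt--Waldhausen identify $BG\to G$ as the $\eta$-map.
\end{itemize}

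\textbf{Two corrections to Step 3.} The $\mathrm{im}(J_n)$ indeterminacy comes only from the choice of framing of $\Sigma_1$, not from an $O$-part of the factorization. Also, a map $\Omega QS^0\to QS^0$ would act on $\pi_n$ as $\pi^s_{n+1}\to\pi^s_n$. Multiplication by $\eta$ raises degree, which is why the source must be the delooping $BG$.
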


A completely different proof of Theorem \ref{thm:structural} (and therefore of all the above corollaries) was obtained by Porcelli and Smith in \cite{PS} by combining the results of \cite{ACGK} with Floer homotopy theory.

 \subsection{Idea of the proof}

The proof of Theorem \ref{thm:structural} uses generating functions of tube type, or said differently, generating functions on tube bundles. A tube, as introduced by Waldhausen in \cite{W}, is a codimension zero submanifold with boundary of Euclidean space which is the image under a compactly supported isotopy of the addition of a standard handle to a half-space. We rely on two crucial ingredients:

\begin{itemize}
\item[(1)] Results of Waldhausen and B\"okstedt \cite{W, B, BW} regarding a map on the space of tubes called {\it the derivative}, which is closely related to the splitting map of algebraic K-theory.
\item[(2)] The result \cite{ACGK} of Abouzaid, Courte, Kragh and Guillermou on the existence of generating functions of tube type for nearby Lagrangian homotopy spheres. \end{itemize}

A sketch of the proof is as follows. Waldhausen defined a derivative map $\cT_\infty \to G$, where $\cT_\infty$ is the space of stable tubes and $G$ is the space of stable homotopy equivalences of spheres. By (1) this map factors through the classifying space for spherical fibrations $BG$, where the map $BG \to G$ is the $\eta$-map, which acts on $\pi_n$ by multiplication with the Hopf element when $n \geq 3$. 

Now suppose that $\Sigma_0$ and $\Sigma_1$ are homotopy spheres and $\Sigma_0 \subset J^1\Sigma_1$ is a Legendrian embedding which is generated by the function $f:W \to \bR$ on a tube bundle $W \to \Sigma_1$. We will identify the pushforward under the Waldhausen map of the element in $\pi_n\cT_\infty$ classifying $W$ to be the element of $\pi_nG \simeq \pi_n^s$ whose coset in $\text{coker}(J_n)$ is the difference of $\Sigma_0$ and $\Sigma_1$ in $\Theta_n/bP_{n+1}$. 

The proof of Theorem \ref{thm:structural} is then completed using the existence result for generating functions (2), which states that for $\Sigma_0,\Sigma_1$ homotopy spheres any Lagrangian embedding $\Sigma_0 \subset T^*\Sigma_1$ admits a generating function on some tube bundle.




 \subsection{Beyond homotopy spheres}

Now let $L$ and $M$ be arbitrary closed, connected, smooth manifolds of the same dimension. An exact Lagrangian embedding $L \hookrightarrow T^*M$ is called a {\em nearby Lagrangian}. The well-known {\em nearby Lagrangian conjecture} predicts that any nearby Lagrangian is Hamiltonian isotopic to the zero-section, and in particular that $L$ must be diffeomorphic to $M$.  In the joint work  \cite{AACK} with M. Abouzaid, S. Courte and T. Kragh, the author recently established a general constraint on the smooth structure of nearby Lagrangians which generalizes Theorem \ref{thm:structural}. 
 
We first recall that although it is not known whether the conjecture or the implication on the diffeomorphism type of nearby Lagrangians holds, it is known that the map $L \to M$ obtained by post-composition of a nearby Lagrangian $L \hookrightarrow T^*M$ with the projection $T^*M \to M$ is a homotopy equivalence \cite{KA}.  Surgery theory divides the problem of whether a homotopy equivalence between smooth closed manifolds is  homotopic to a diffeomorphism into two sub-problems, first whether the normal invariant is trivial, and second whether an $L$-group obstruction vanishes. We have nothing to say on the $L$-group obstruction, but are able to constrain the normal invariant of the homotopy equivalence $L \to M$ associated to a nearby Lagrangian $L \hookrightarrow T^*M$. 

The normal invariant of a homotopy equivalence $L \to M$ is a map $M \to G/O$ which is well-defined up to homotopy, where here and below we use the notation $G=\lim_n G_n$ for $G_n$ the space of maps $S^n \to S^n$ of degree $\pm 1$,  $O=\lim_n O_n$ is the stable orthogonal group, and $G/O$ is the homotopy fiber of the map $BO \to BG$ induced by the $J$-map $O \to G$. The precise statement of the constraint is as follows, where we also denote $\mathcal{Q}=\lim_n \mathcal{Q}_n$ for $\mathcal{Q}_n$ the space of non-degenerate quadratic forms on $\bR^n$, and denote by $\mathcal{T}$ the space of tubes.
 
 \begin{theorem}[\cite{AACK}]\label{thm:general}
If $L \hookrightarrow T^*M$ is a nearby Lagrangian, then the normal invariant $M \to G/O$ of the homotopy equivalence $L \to M$ factors as a composition
 $$ M \to B(\mathcal{T}, \mathcal{Q} ) \to B(G/O) \to G/O $$
 where the first map classifies a twisted generating function of tube type for $L$, the second map is a twisted version of the forgetful map $\mathcal{T} \to \mathcal{T}^{\text{TOP} } \simeq \bZ \times BG$ from the space of tubes to the space of topological tubes, and the third map is a twisted version of the S-duality map $BG \to G$.
 \end{theorem}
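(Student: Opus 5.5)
The plan is to reduce the statement to the existence of a twisted generating function of tube type. We then read off the normal invariant from the fiberwise Pontryagin--Thom construction applied to the fiberwise critical locus.

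\emph{Step 1 (existence).} First lift the nearby Lagrangian $L\hookrightarrow T^*M$ to a Legendrian in $J^1M$, which is possible since $L$ is exact. Then invoke the existence theorem in the spirit of \cite{ACGK}, in the twisted form proved in \cite{AACK}. After stabilization, this produces a bundle $W\to M$ whose fibers are tubes, together with a bundle of nondegenerate quadratic forms on an auxiliary vector bundle $E\to M$, and a function $f$ on $W\oplus E$. This $f$ is fiberwise standard near infinity and quadratic in the $E$-direction, and generates $L$. The choice of $(W,E,q)$ up to homotopy is precisely a map $M\to B(\mathcal{T},\mathcal{Q})$, and this map is the first arrow of the factorization.

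\emph{Step 2 (normal invariant from $f$).} Let $\Sigma_f=\{\partial_{\mathrm{vert}} f=0\}\subset W\oplus E$ be the fiberwise critical locus. Transversality gives a diffeomorphism $\Sigma_f\cong L$ under which the projection to $M$ becomes the homotopy equivalence $L\to M$. Moreover, the normal bundle of $\Sigma_f$ is the vertical tangent bundle $V$, which gives
\[
TL\oplus V|_L\;\cong\; p^*TM\oplus V|_L ,
\]
and hence a stable identification of $\nu_L$ with $p^*\nu_M$ corrected by vector bundle data. Choose the fiberwise Pontryagin--Thom collapse from $W\oplus E$, relative to infinity, onto the Thom space of $V|_L$. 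Together with the bundle isomorphism above, this collapse is a degree one normal map covering $L\to M$. Its class in $[M,G/O]$ is the normal invariant. The description exhibits this class as a difference of two structures: an $O$-part coming from $V$ and $q$, and a $G$-part coming from the collapse on the topological tube bundle.

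\emph{Step 3 (identification).} Fiberwise, each tube is topologically standard, so forgetting smooth structure sends $W$ to a map $M\to\mathcal{T}^{\mathrm{TOP}}\simeq \bZ\times BG$. Because the $E$-part is linear, it refines this, in twisted form, to a map to $B(G/O)$; this is the second arrow. It then remains to show that the collapse map of Step 2 is obtained from this spherical data by the twisted $S$-duality $BG\to G$. I would check this fiberwise first: for a single tube, the collapse of the critical locus should be the Spanier--Whitehead dual of the inclusion of the boundary of the tube. Then I would globalize by a parametrized duality argument over $M$.

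The main obstacle is Step 3: making the pointwise duality identification homotopy coherent over $M$, compatibly with the quadratic twist and the delooping $B(G/O)\to G/O$. I would try to build both maps as maps of parametrized spectra over $M$ and compare them via a universal case over $B(\mathcal{T},\mathcal{Q})$ itself, so that naturality reduces the statement to the universal example.
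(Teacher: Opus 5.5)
The paper does not prove this statement itself; it is quoted from \cite{AACK}. Still, the paper's discussion shows that your proposal has a genuine gap, starting in Step 1.

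The data you describe in Step 1 is not a twisted generating function. You have a global tube bundle $W\to M$, a global bundle of nondegenerate quadratic forms $q$ on a vector bundle $E\to M$, and a function on $W\times_M E$ that is quadratic in the $E$-direction. That is just an ordinary, untwisted generating function of tube type. Stabilizing $W$ by the rigid tube bundle of $(E,q)$ gives again a tube bundle, classified after stabilization by a map $M\to\cT_\infty$, not by a map to $B(\mathcal{T},\mathcal{Q})$.

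For a general nearby Lagrangian, existence of such an untwisted object is open. By \cite{ACGK} it is equivalent to the triviality of the stable Gauss map $L\to U/O$. What \cite{ACGK} actually proves (not \cite{AACK}, which only translates it into its own tube model) is existence of a twisted object. There the quadratic stabilization data is only defined locally, and the pieces are glued coherently by fibrewise diffeomorphisms and stabilizations by quadratic forms. This twisting by $\mathcal{Q}$ is what $B(\mathcal{T},\mathcal{Q})$ records.

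In particular there is no global total space $W\oplus E$, so the fibrewise Pontryagin--Thom collapse of Step 2 is not available. As written, Steps 1--2 only treat the untwisted case. That is essentially this paper's argument for homotopy spheres (Lemmas \ref{lem:rep}, \ref{lem:thom}, \ref{lem:trick}), where one factors through $\cT_\infty\to BG\to G$ and no $B(G/O)$ appears.

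The second gap is Step 3. You flag it as the main obstacle, but it is the actual content of the theorem and is left as a plan. The idea the paper points to is in the Remark following Corollary \ref{cor:eta}: construct monoidal models in which the Waldhausen derivative and its B\"okstedt--Waldhausen factorization $\mathcal{T}\to\mathcal{T}^{\text{TOP}}\simeq\bZ\times BG\to G$ are maps of topological monoids. These must be compatible with stabilization by $\mathcal{Q}\simeq\bZ\times BO$, on which the derivative is the reflection map to $O$ (compare Lemma \ref{prop:BO}).

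Only with this structure can you pass to bar constructions, schematically $B(\mathcal{T},\mathcal{Q})\to B(\mathcal{T}^{\text{TOP}},\mathcal{Q})\simeq B(G/O)\to B(G,O)\simeq G/O$. This is where $B(G/O)$ comes from. It does not come from the linearity of a global $E$-summand, which by itself refines nothing.

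You would also need the twisted analogue of Lemmas \ref{lem:rep}--\ref{lem:trick}, identifying the normal invariant of $L\to M$ with this twisted derivative. Your fibrewise $S$-duality heuristic and your reduction to a universal case over $B(\mathcal{T},\mathcal{Q})$ point in a reasonable direction. But without the monoid-level compatibility, the maps out of the bar construction are not even defined.
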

 
 The above statement is somewhat technical and the proof is much more so. However, in the special case in which $M$ (and therefore also $L$) is a homotopy sphere the situation is considerably simpler. The value of the present note compared to \cite{AACK} is to give an exposition of this result and its proof in this special case, which we hope will be helpful to understand the general idea and may serve as a stepping stone towards \cite{AACK}. 
 
 In both the special and the general case the strategy of proof relies crucially on the foundational work \cite{ACGK} by M. Abouzaid, S. Courte, S. Guillermou and T. Kragh, which establishes existence of twisted generating functions of tube type for nearby Lagrangians, generalizing the breakthrough result \cite{K} of T. Kragh in $\mathbf{R}^{2n}$. However, for homotopy spheres the situation is significantly simpler in that \cite{ACGK} shows that nearby Lagrangian homotopy spheres admit genuine, i.e. untwisted, generating functions. As a result, much of the technical work in \cite{AACK} may be avoided if one is willing to appeal to the article \cite{BW} of B\"okstedt and Waldhausen. 
 
 Unfortunately, the authors of \cite{AACK} were unable to fully parse the details of \cite{BW}. Moreover, the scope of the discussion in \cite{BW} is insufficient for the proof of Theorem \ref{thm:general} in full generality. For this reason, \cite{AACK} includes a self-contained treatment of the relevant results of \cite{BW}, and moreover upgrades them to the category of topological monoids, which is necessary to account for the twisting of twisted generating functions. In the present expository article we restrict the discussion to the special case of homotopy spheres, where the twisting is unnecessary, and directly appeal to the results of \cite{BW}, treating them as a black box.
 
As mentioned above, a different proof of the main result of this note concerning Lagrangian homotopy spheres, Theorem \ref{thm:structural}, was obtained by Smith and Porcelli in \cite{PS} with completely different methods. In the more recent work \cite{PS2} Smith and Porcelli were able to push these methods further to obtain a general result very close to Theorem \ref{thm:general}, namely that for $L$ and $M$ arbitrary smooth, closed, connected manifolds, and for $L \hookrightarrow T^*M$ an exact Lagrangian embedding, the normal invariant $M \to G/O$ of the homotopy equivalence $L \to M$ factors as a composition $M \to B(G/O) \to G/O$. Moreover, they identified the map $B(G/O) \to G/O$ as an $\eta$-map, i.e. the map induced by the action of the generator $\eta \in \pi_1^s \simeq \bZ/2$ with respect to the infinite loopspace structure on $G/O$ it inherits as the fiber of the delooped $J$-map $BO \to BG$, which is a map of infinite loopspaces with respect to the operation of direct sum on $BO$ and the operation of smash product on $BG$. 

Theorem \ref{thm:general} also provides a factoring $M \to B(G/O) \to G/O$, but does not identify the map $B(G/O) \to G/O$ as the $\eta$-map, merely as the map induced by the S-duality map $BG \to G$, which is checked explicitly in \cite{AACK} to be 2-torsion. However, Theorem \ref{thm:general} provides the further factoring through the map $B(\mathcal{T},\mathcal{Q}) \to B(G/O)$, which \cite{PS2} does not. It would be interesting to compare the factorings of \cite{AACK} and \cite{PS2}, and in particular to verify that the map $B(G/O) \to G/O$ in Theorem \ref{thm:general} is indeed the $\eta$-map. 

A prior version of this note was originally uploaded to the author's website in 2020 as a placeholder for \cite{AACK} but is only now being uploaded to the arXiv, with minor modifications, and will appear in the proceedings of the 2025 Georgia International Topology Conference.

\subsection{Acknowledgements}
The author first started discussing these ideas with Sylvain Courte after attending a talk he gave at Jussieu in which he presented the main results of \cite{ACGK}. A visit to the Institut Fourier in Grenoble was then arranged, during which the ideas in this note emerged. The author is grateful to Sylvain and to the Institut Fourier for their hospitality. The author is also grateful to Mohammed Abouzaid and Thomas Kragh, who had also been thinking about the normal invariant of nearby Lagrangians and welcomed a four way collaboration to develop these ideas further, culminating in the article \cite{AACK}. The author is also grateful to Kiyoshi Igusa, from whom he has learned a great deal of algebraic K-theory, as well as to S\o ren Galatius and Sander Kupers for many helpful conversations on homotopy theory. Finally, the author is grateful to the Institute of Advanced Study, Princeton University, MIT and Brandeis University, where the author was appointed in various capacities during the period in which these ideas were developed, and gratefully acknowledges the support of the NSF under grant DMS-2203455, the Simons Foundation, and the K-Theory Foundation. 

\section{Bundles of tubes}

\subsection{Pontryagin-Thom on tubes}

Let $E$ be a $k$-dimensional linear subspace of $\bR^N$. Consider the codimension zero submanifold with boundary $T_E \subset \bR^{N+1}$ obtained by attaching to the half-space $\{x_{N+1} \leq 0 \}$ a standard $(N+1)$-dimensional index $k$ handle along the unit sphere of $E \subset \{x_{N+1}=0\}$. More concretely, $T_E$ is the union in $\bR^{N+1}$ of the half-space $\{x_{N+1} \leq 0\}$ and an $\varepsilon$-neighborhood of the $k$-dimensional unit sphere of the $(k+1)$-dimensional subspace of $\bR^{N+1}$ spanned by $E \oplus 0 \subset \bR^{N+1}$ and $\partial / \partial x_{N+1}$, suitably smoothed so that $T_E$ is a smooth codimension zero submanifold with boundary. We call such a codimension zero submanifold $T_E \subset \bR^{N+1}$ a {\it rigid tube}. Since the thickening and smoothing are homotopically canonical, for fixed $k$ and $N$ the space of rigid tubes $T_E$ as above has the homotopy type of $BO(k,N)$, the Grassmannian of real $k$-dimensional linear subspaces of $\bR^N$.

\begin{definition} A {\it tube} is any codimension zero submanifold $T \subset \bR^{N+1}$ which is the image of a rigid tube under a smooth isotopy of $\bR^{N+1}$, fixed outside of a compact set.  \end{definition}

\begin{warning}
The definition of a tube given above is almost identical to the definition of a tube given by Waldhausen in \cite{W} in terms of partitions, but is slightly different from the notion of tube considered in \cite{AACK}, where we chose to work with a different model which is better suited for a treatment of the monoidal structure. The difference is only unstable, i.e. the two notions of a tube yield the same space of stable tubes. 
\end{warning}

\begin{definition} Let $M$ be a closed manifold. A {\em tube bundle} $W \to M$ is a smooth fibre bundle of manifolds whose fibres are tubes $T \subset \bR^{N+1}$ in a fixed Euclidean space. \end{definition}

For $T \subset \bR^{N+1}$ a tube, we consider vector fields $X_T$ on $T$ such that:
\begin{itemize}
\item[(1)] $X_T$ is outwards pointing along $\partial T$.
\item[(2)] $X_T= \partial_{N+1}$ outside of a compact subset.
\end{itemize}

Let $W \to M$ be a tube bundle over a closed smooth manifold $M$. We consider fibrewise vector fields $X:W \to \bR^{N+1}$ such that the restriction of $X$ to each fibre is a vector field satisfying (1) \& (2) and such that
\begin{itemize}
\item[(3)]$X$ has 0 as a regular value.
\end{itemize}
If $\fF$ is a framing of $M$, i.e. a trivialization of its stable normal bundle, then the manifold $X^{-1}(0)$ gets an induced framing $\fF_X$. Explicitly, one has a framing of the normal bundle of $X^{-1}(0)$ in the total space of the tube bundle $W$ given by the pullback $(dX)|_{X=0}^{-1} \mathcal{B}$ of the canonical basis $\mathcal{B}=(e_1,\ldots,e_{N+1})$ of $\bR^{N+1}$, viewed as a basis for $T_0\bR^{N+1}$. But $W$ is a codimension zero submanifold of $M \times \bR^{N+1}$, so the stable normal bundle of $X^{-1}(0)$ is the Whitney sum of the normal bundle to $X^{-1}(0)$ in $W$ and the pullback to $X^{-1}(0)$ of the stable normal bundle of $M$. Therefore the framings $(dX)|_{X=0}^{-1} \mathcal{B}$ and $\fF$ combine to yield a framing $\fF_X$ of the stable normal bundle of $X^{-1}(0)$. 

\begin{lemma}\label{lem:indep}
Let $W \to M$ be a tube bundle over a framed manifold $(M ,\fF)$. For $X$ satisfying (1), (2) \& (3) the framed bordism class of $(X^{-1}(0), \fF_X)$ is independent of $X$.
\end{lemma}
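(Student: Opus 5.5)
Given two vector fields $X_0, X_1$ on $W$ satisfying (1), (2) \& (3), we will build a framed bordism between $(X_0^{-1}(0),\fF_{X_0})$ and $(X_1^{-1}(0),\fF_{X_1})$ directly. Consider the straight-line homotopy $X_t=(1-t)X_0+tX_1$, viewed as a single fibrewise vector field $\widetilde X$ on the tube bundle $W\times[0,1]\to M\times[0,1]$.

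The first step is to check that each $X_t$ still satisfies (1) and (2). Condition (2) is clear: both fields equal $\partial_{N+1}$ outside a common compact set, hence so does every convex combination. Condition (1) also holds, because being outward pointing along $\partial T$ is a convex condition: a convex combination of outward vectors is outward. In particular $X_t$ is nonvanishing on $\partial W$ and outside a compact set. Hence the zero set of $\widetilde X$ is a compact subset of the interior of $W\times[0,1]$, apart from its ends at $t=0,1$.

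Next I would apply relative transversality. Since $0$ is already a regular value of $\widetilde X$ near $t=0,1$, a small perturbation of $\widetilde X$, supported in a compact subset of the interior of $W\times(0,1)$, makes $0$ a regular value everywhere. The perturbation must be $C^0$-small so that no new zeros appear near $\partial W$ or at infinity. Then $Z=\widetilde X^{-1}(0)$ is a compact manifold with boundary $X_0^{-1}(0)\sqcup X_1^{-1}(0)$.

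Finally I would frame $Z$. The normal bundle of $Z$ in $W\times[0,1]$ is framed by $(d\widetilde X)^{-1}\mathcal B$. Moreover, $W\times[0,1]$ is codimension zero in $M\times[0,1]\times\bR^{N+1}$, whose stable normal bundle is framed by $\fF$ pulled back along $M\times[0,1]\to M$. Combining these two framings exactly as in the definition of $\fF_X$ gives a framing of $Z$ that restricts to $\fF_{X_0}$ and $\fF_{X_1}$ at the ends, with the usual collar and inward/outward normal conventions.

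The main obstacle is the transversality step. One must make sure the perturbation preserves the boundary conditions, and that the zero set is compact even though the fibres are noncompact. Both points follow from condition (2) together with the outward pointing condition.
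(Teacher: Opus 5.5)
Your proposal is correct and follows the paper's proof essentially step for step. You use the straight-line homotopy $X_t=(1-t)X_0+tX_1$, note that conditions (1) and (2) are convex, perturb for transversality keeping the homotopy fixed near $t=0,1$ and outside a compact set, and take the framed zero set of the resulting map $W\times[0,1]\to\bR^{N+1}$ as the bordism; beyond the paper, you also spell out why that zero set is compact and how its framing is assembled.
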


\begin{proof}
The space of all $X$ satisfying (1) and (2) is convex. The straight line homotopy $X_t=(1-t)X_0+tX_1$ between any two $X_0$ and $X_1$ satisfying (1), (2) and (3) will therefore automatically satisfy (1) and (2). Furthermore, by Thom transversality the homotopy $X_t$ may be perturbed, keeping it fixed near $t=0,1$, as well as keeping it fixed outside of a compact subset for all $t \in [0,1]$, so that $0$ is a regular value of the map $X:W \times [0,1] \to \bR^{N+1}$, $X(\cdot,t)=X_t(\cdot)$. Then $X^{-1}(0)$ yields a framed bordism between $X_0^{-1}(0)$ and $X_1^{-1}(0)$.
\end{proof}

\begin{definition} Let $W \to M$ be a tube bundle over a framed manifold $(M, \mathfrak{F})$. We denote by $\beta(W,\fF) \in \Omega^{fr}_n$ the framed bordism class $(X^{-1}(0),\fF_X)$ for any $X$ satisfying (1), (2) \& (3). \end{definition}


\subsection{Generating functions on tubes}

We review the generating function construction in the setting of tube bundles.

Let $T \subset \bR^{N+1}$ be a tube. We consider functions $g:T \to \bR$ such that:
\begin{itemize}
\item[(1)] $\partial T$ is a regular level set of $g$.
\item[(2)] $g=x_{N+1}$ outside of a compact set.
\end{itemize}

Let $W \to M$ be a tube bundle. We consider functions $f:W \to \bR$ such that the restriction of $f$ to each fibre is a function satisfying (1) \& (2) and such that:
\begin{itemize}
\item[(3)] the fibrewise Euclidean gradient $\nabla_Tf:W \to \bR^{N+1}$ has 0 as a regular value.
\end{itemize}

We denote by $f_m:T_m \to \bR$ the restriction of $f$ to the fibre over $m \in M$. 
Recall that $J^1M=T^*M \times \bR$ is equipped with the canonical contact form $\alpha=dz-pdq$ for $pdq$ the Liouville form on $T^*M$. A Legendrian submanifold $\Lambda \subset J^1M$ is a smooth submanifold of the same dimension as $M$ such that $\alpha|_\Lambda=0$. 

\begin{definition} A {\it generating function of tube type} for a Legendrian $\Lambda \subset J^1M$ consists of a function $f:W \to \bR$ on a tube bundle $W \to M$ which satisfies the properties (1), (2) \& (3) and which is a generating function for $\Lambda$ in the usual sense (see for example \cite{G}).  \end{definition}

Recall that $T^*M$ is equipped with the symplectic form $\omega=dp \land dq$. A Lagrangian submanifold $L \subset T^*M$ is a smooth submanifold of the same dimension as $M$ such that $\omega|_M=0$. Since $dp \land dq = d(pdq)$, an embedded Legendrian submanifold $\Lambda \subset J^1M$ generically projects to an immersed Lagrangian submanifold $L \subset T^*M$. In this case we say that $\Lambda$ is a Legendrian lift of $L$. If $L \subset T^*M$ is an exact Lagrangian submanifold, which means that $pdq|_L$ is an exact form, then $L$ admits a Legendrian lift. If $L$ is connected then the Legendrian lift of an exact Lagrangian embedding $L \subset T^*M$ is unique up to translations in the $\bR$ factor of $J^1M=T^*M \times \bR$. 

We say that a function generates an exact Lagrangian $L \subset T^*M$ if it generates a Legendrian lift of $L$. Note that a generic function $f:W \to \bR$ generates an embedded Legendrian, but its Lagrangian projection will in general only be immersed. 

Note also that if $\Lambda \subset J^1M$ is a Legendrian admitting a generating function $f$ of tube type on a tube bundle $W \to M$ and $M$ is framed, then $\Lambda$ is naturally equipped with an induced framing, see \cite{G}. More precisely, to a framing $\mathfrak{F}$ of $M$ and a generating function $f:W \to \bR$ for $\Lambda$ on a tube bundle over $M$, we assign a framing $\fF_f$ of $\Lambda$.

\begin{definition} For $\fF$ a framing of $M$ and $\Lambda \subset J^1M$ a Legendrian generated by a function $f:W \to \bR$, we call $\fF_f$ the {\it induced framing} on $\Lambda$. \end{definition}

\begin{lemma}\label{lem:rep}
Let $(M, \mathfrak{F})$ be a framed manifold and $\Lambda \subset J^1M$ a Legendrian submanifold admitting a generating function $f:W \to \bR$ on a tube bundle $W \to M$. Then $(\Lambda, \fF_f)$ is equal to $\beta(W,\fF)$ in $\Omega^{fr}_n$.
\end{lemma}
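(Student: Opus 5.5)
The plan is to take $X=\nabla_T f$, the fibrewise Euclidean gradient of the generating function, as the vector field in the definition of $\beta(W,\fF)$, and then check that the two framings agree. First I verify that $\nabla_T f$ satisfies conditions (1), (2) and (3) for vector fields. Since $\partial T_m$ is a regular level set of $f_m$ and $f_m = x_{N+1}$ outside a compact set, the gradient is nonvanishing along $\partial T_m$ and normal to it. It points outward as soon as $\partial T_m$ is the maximal level, i.e. $f_m<c$ on the interior. This holds near infinity because $T$ agrees with the half-space $\{x_{N+1}\le 0\}$ there. If the sublevel normalization is not automatic from the conventions, I first modify $f$ near $\partial W$ through functions satisfying (1) and (2) without changing the critical set. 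Condition (2) for vector fields follows from $\nabla x_{N+1}=\partial_{N+1}$, and condition (3) is exactly condition (3) for $f$. By Lemma \ref{lem:indep} we therefore have $\beta(W,\fF)=[(\nabla_T f)^{-1}(0),\fF_{\nabla_T f}]$.

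Next, the fibre critical locus $\Sigma_f=(\nabla_T f)^{-1}(0)\subset W$ maps diffeomorphically onto $\Lambda$ via $(m,v)\mapsto (m, d_m f(m,v), f(m,v))$; this is the standard generating function correspondence from \cite{G}, using that the fibrewise gradient has $0$ as a regular value. It remains to compare the framings. The framing $\fF_{\nabla_T f}$ is the stable normal framing of $\Sigma_f$ obtained by pulling back the basis $\mathcal{B}$ by the differential of the fibrewise gradient and adding $\fF$. The induced framing $\fF_f$ in \cite{G} is defined through exactly this identification: the normal bundle of $\Sigma_f$ in $W$ is trivialized by $d(\nabla_T f)$, which is the fibre Hessian together with its mixed partials, and combining this with $\fF$ transported along $\Lambda\cong\Sigma_f\subset W\subset M\times\bR^{N+1}$ gives the stable normal framing.

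So the argument reduces to unwinding the definition of the induced framing and matching the two stable splittings $\nu_{\Sigma_f}\oplus TW|_{\Sigma_f}$ and $\nu_M\oplus \bR^{N+1}$. The main obstacle is bookkeeping: making sure the convention for $\fF_f$ in \cite{G} agrees with the one here, including the use of $\mathcal{B}$ versus a possibly reordered or sign-twisted basis. If the conventions differ by a fixed linear map $A\in GL(N+1)$, the resulting framings differ by a constant element of $O$. In that case I would deform $A$ to the identity within $GL^+$, after possibly stabilizing once to fix orientation. Such a constant change is null-homotopic, so the framed bordism class is unchanged and the lemma follows.
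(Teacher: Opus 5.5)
Your proposal follows the paper's proof: take $X=\nabla_T f$ as the fibrewise vector field, identify the fibrewise critical set $(\nabla_T f)^{-1}(0)$ with $\Lambda$ via the generating function correspondence, and note that $\fF_f$ is by construction the framing obtained from $d(\nabla_T f)$ together with $\fF$, so it coincides with $\fF_{\nabla_T f}$. Your two fallback arguments are unnecessary and are not valid as stated. A constant orientation-reversing change of framing negates the class in $\Omega^{fr}_n$ and is not removed by stabilizing. Reversing the direction of $\nabla_T f$ along a compact boundary component necessarily creates new fibrewise critical points. So drop both fallbacks: rely on the definitional agreement of the framings, and get outward-pointing by propagating the sign from infinity along $\partial T_m$, which is connected for index $1\le k\le N-1$ (an assumption you may make after stabilizing).
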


\begin{proof} We can use the fiberwise Euclidean gradient $\nabla _Tf$ as a fibrewise vector field to get a representative for the framed bordism class. Then by definition of generating function the fiberwise critical set $\Sigma = \nabla_T f^{-1}(0)$ comes equipped with a diffeomorphism to $\Lambda$, and under this identification the induced framings $\fF_f$ and $\fF_{\nabla_T f}$ coincide.\end{proof}

\subsection{Existence of generating functions of tube type}

Let $L$ be the zero section in $T^*S^n$. Then $L$ admits a generating function $f$ over any bundle of rigid tubes over $S^n$. Indeed, up to contractible choice, a rigid tube bundle $T_E$ is completely determined by a vector bundle $E$ over $S^n$. We can then take $f$ to be a standard function with one unique critical point on each fibre. Near the critical point this is given by a family of quadratic forms $Q_E$ over $S^n$ with negative eigenspace $E$, i.e. classified by the same map $S^n \to BO$. 

By the homotopy lifting property for generating functions under Hamiltonian isotopies, the nearby Lagrangian conjecture predicts that if $\Sigma_0$ and $\Sigma_1$ are homotopy spheres, then any Lagrangian embedding $\Sigma_0 \subset T^*\Sigma_1$ admits a generating function on any bundle of rigid tubes, including the trivial bundle. We note that a generating function of tube type on a trivial (rigid) tube bundle is essentially the same thing as the more classical notion of a generating function quadratic at infinity, i.e. a generating function  $g:M \times \bR^N \to \bR$ on the trivial $\bR^N$ bundle which outside of a compact subset is equal to a fixed non-degenerate quadratic form $q:\bR^N \to \bR$. While the problem of existence of such generating functions seems currently out of reach, if we allow for potentially non-rigid tube bundles we have the following result, which will be a key ingredient to our discussion in what follows.

\begin{theorem}[\cite{ACGK}]\label{thm:existence} If $\Sigma_0, \Sigma_1$ are homotopy spheres, then any Lagrangian embedding $\Sigma_0 \subset T^*\Sigma_1$ admits a generating function of tube type on some tube bundle $ W \to \Sigma_1$. \end{theorem}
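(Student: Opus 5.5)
The plan is to follow the route of \cite{ACGK}: first produce a \emph{twisted} generating function for the Legendrian lift $\Lambda\subset J^1\Sigma_1$ of the given exact Lagrangian $\Sigma_0\subset T^*\Sigma_1$, and then use the fact that the base is a homotopy sphere to remove the twisting. The untwisted output is then a function $f:W\to\bR$ on a genuine tube bundle $W\to\Sigma_1$ satisfying (1), (2) and (3). Throughout I treat the analytic input (sheaf quantization) and Waldhausen's tube theory \cite{W} as available black boxes.

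\emph{Step 1: sheaf quantization.} Following Guillermou, I would attach to $\Lambda$ a constructible sheaf $F$ on $\Sigma_1\times\bR$ with the following properties.
\begin{itemize}
\item[(a)] Its microsupport lies in the cone over $\Lambda$.
\item[(b)] It is simple along $\Lambda$.
\item[(c)] It is the constant sheaf for $z\ll 0$ and zero for $z\gg 0$.
\end{itemize}
This should be done with coefficients in the sphere spectrum rather than a field, or at least with enough control that the stalks are finite complexes. Combined with the Abouzaid--Kragh result that $\Sigma_0\to\Sigma_1$ is a simple homotopy equivalence, the resulting object is a family, parametrized by $\Sigma_1$, of finite "cell-like" data: over each point $m$ we get a filtered finite spectrum interpolating between $S^0$ and a point. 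After a local generic perturbation, this is exactly the homotopical shadow of a fibrewise function which is $x_{N+1}$ at infinity.

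\emph{Step 2: geometric realization, locally.} Over small cells of a triangulation of $\Sigma_1$ I would realize these families by genuine fibrewise functions on tubes. The mechanism is Waldhausen's identification of the space of (stable) tubes, via handle decompositions and Morse functions, with a space of filtered finite spectra. This is essentially the content of the parametrized $h$-cobordism and Igusa-type stability theory. Stabilizing $N$ makes the realization unique up to contractible choice on each cell.

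\emph{Step 3: gluing.} I would glue the local realizations over the skeleta of $\Sigma_1$. The obstruction to gluing lies in the difference between tubes realizing the data and the data itself, and it is measured by a map to a space of automorphisms. Globally this only yields a twisted family, namely a section of a bundle of tubes twisted by a map $\Sigma_1\to B\mathrm{Aut}$, which is the twisted generating function of \cite{ACGK}.

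\emph{Step 4: untwisting over a sphere.} This is the step specific to the present statement, and I expect it to be the main obstacle. Since $\Sigma_1$ is a homotopy sphere, I would decompose it as $D^n\cup D^n$. Over each disk the twist is trivial, so the twisting reduces to a single clutching element in $\pi_{n-1}$ of the structure group. I would then try to show that this clutching can be absorbed by changing the tube bundle itself, i.e. that the clutching map lifts to $\pi_{n-1}$ of the automorphisms of a stable tube. The point to verify first is that, for a sphere, every homotopy class of clutching is realized by some tube bundle; this uses stability, taking $N$ large. After this absorption the twisted object becomes an honest $f:W\to\bR$, which completes the proof. Since stalkwise finiteness and simple homotopy data enter here, I would check the Whitehead-torsion vanishing, which holds because $\pi_1=0$, before claiming the lift.
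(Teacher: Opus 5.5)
\paragraph{The gap is in Step 4.} The paper gives no proof of this statement; it imports it from \cite{ACGK}. It records only three things:
\begin{itemize}
\item every nearby Lagrangian admits a \emph{twisted} generating function of tube type;
\item the twisting can be removed if and only if the stable Gauss map $L\to U/O$ is trivial;
\item B\"okstedt's theorem, that the rigid tube map $BO\to\cT_\infty$ is a rational equivalence, is used in an essential way.
\end{itemize}
Your outline has this shape, and Steps 1--3 can be read as a loose paraphrase of the twisted existence result. Step 2 does overstate Waldhausen: the forgetful map $\cT_\infty\to BG$ has fibre $\cH_\infty$, which is rationally $G/O$, so realizations are not unique up to contractible choice.

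Step 4 is where all the sphere-specific content lies, and your proposed mechanism does not reach the actual obstruction. The twisting is not by automorphisms of a tube. It is by stabilization with families of nondegenerate quadratic forms, i.e.\ by $\mathcal{Q}\simeq\bZ\times BO$ acting on tubes; this is the $B(\cT,\mathcal{Q})$ of Theorem~\ref{thm:general}. Over $\Sigma_1=D_+\cup D_-$ you get generating functions $f_\pm$ on tube bundles $W_\pm\to D_\pm$, and a family $q:S^{n-1}\to\mathcal{Q}$ such that $f_+\oplus q$ is equivalent to $f_-$ over the equator. The class $c\in\pi_{n-1}(\bZ\times BO)\cong\pi_n(U/O)$ of $q$ is the Gauss map obstruction, an invariant of the Lagrangian. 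This group is $\bZ$ for $n\equiv 1\bmod 4$ and $\bZ/2$ for $n\equiv 2,3\bmod 8$. Enlarging $N$ does not change $c$, and Whitehead torsion plays no role. What has to be shown is that $c=0$, not that some tube bundle realizes the clutching.

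\paragraph{How to close it.} One way, consistent with the paper's remark that B\"okstedt's theorem is essential in \cite{ACGK}, runs as follows.
\begin{itemize}
\item The equivalence over the equator identifies the tube bundle $W_+\oplus q$ with $W_-|_{S^{n-1}}$. The latter is trivial because it extends over $D_-$.
\item Since $W_+|_{S^{n-1}}$ is also trivial, the image of $c$ under the rigid tube map $\pi_{n-1}BO\to\pi_{n-1}\cT_\infty$ vanishes, up to translating by the fixed fibre of $W_+$.
\item That map is injective. Its kernel is torsion because the rigid tube map is a rational equivalence. The only torsion in $\pi_{n-1}BO$ is a $\bZ/2$ in degrees $n-1\equiv 1,2\bmod 8$. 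These classes survive in $\pi_{n-1}BG$, because the composite $BO\to\cT_\infty\to BG$ is $BJ$ and $J$ is injective on them by Adams.
\item Hence $c=0$, so $q$ extends over $D_+$. Then $f_+$ stabilized by this extension glues with $f_-$ to an honest generating function on a tube bundle over $\Sigma_1$.
\end{itemize}
Without this input, or an equivalent proof that the stable Gauss map of a nearby Lagrangian homotopy sphere vanishes, Step 4 has no mechanism: stability in $N$ and $\pi_1=0$ say nothing about a class in $\pi_{n-1}BO$.
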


\begin{remark}
Some minor translation needs to be carried out in order to go from the notion of generating function of tube type used in \cite{ACGK} to the notion of generating function of tube type defined above, but this is straightforward. Less trivial is the translation to the notion of generating function of tube type used in \cite{AACK}, indeed this translation takes up an entire section of \cite{AACK}. 
\end{remark}

For $M$ an arbitrary smooth closed manifold and for $L \subset T^*M$ an arbitrary nearby Lagrangian, the problem of existence of generating functions of tube type is currently wide open. In  \cite{ACGK} it is proved that this problem is equivalent to asking whether the stable Gauss map $L \to U/O$ is trivial, and moreover in \cite{ACGK} it is proved that without any hypothesis any nearby Lagrangian admits a so-called {\it twisted generating function} of tube type (on a {\em twisted tube bundle}). We will not need this notion in the present note and instead direct the reader who is interested in going beyond homotopy spheres to the article \cite{AACK}.

\section{The space of tubes}

\subsection{Relation to algebraic K-theory}

Recall that for $E$ a $k$-dimensional linear subspace of $\bR^N$ we have a rigid tube $T_E \subset \bR^{N+1}$ obtained by attaching a standard handle along the unit sphere of $E \subset \{x_{N+1}=0\}$. Let $\cT_{k,N}$ be the space of all tubes which are isotopic to $T_E$ by an isotopy fixed outside of a compact set, called $(k,N)$-tubes. 

One could equip $\cT_{k,N}$ with a topology directly, or one could define $\cT_{k,N}$ as the geometric realization of a simplicial space as in \cite{W}, or one could give a model for $\cT_{k,N}$ in terms of a space of suitable functions on $\bR^{N+1}$ for which tubes are regular sub-level sets as in \cite{AACK}, but here we will be agnostic about the details as they will not be relevant for the purposes of this expository note. 

There are two stabilizations procedures $\cT_{k,N} \to \cT_{k,N+1}$ and $\cT_{k,N} \to \cT_{k+1,N+1}$ described in \cite{W}, where we note that the ambient dimension $N$ increases by one in both but the index $k$ of the handle stays fixed in the first and increases by one in the second. In \cite{AACK} concrete models for these stabilizations are given as part of a richer monoid structure on the disjoint union $\coprod_{k,N}  \cT_{k,N}$, however we will not need to be precise for the present exposition and will only need some formal properties satisfied by these stabilization procedures which we may take as a black box.

If we first take the direct limit with respect to the stabilizations $\cT_{k,N} \to \cT_{k,N+1}$ we obtain a space $\cT_k$, which informally may be thought of as the space of tubes of index $k$ in which the ambient dimension $N$ may be taken to be as large as desired. The stabilizations $\cT_{k,N} \to \cT_{k+1,N+1}$ then descend to stabilizations $\cT_k \to \cT_{k+1}$, the direct limit of which is denoted by $\cT_\infty$ and called the {\em space of stable tubes}. Without this final stabilization, the disjoint union $\cT=\coprod_k \cT_k$ may be simply called the {\em space of tubes} (note that $\pi_0 \cT \simeq \bN$, given by the index $k$). In this note we will focus on the stable space $\cT_\infty$. The (stable and unstable) spaces of tubes were  introduced by Waldhausen in his manifold  approach to the algebraic K-theory of spaces \cite{W}.

Following \cite{W}, we also consider the space $BG$, which is the classifying space of stable spherical fibrations. Here we recall $G_n$ consists of maps $S^n \to S^n$ of degree $\pm 1$ and $G=\lim_n G_n$ under suspension. The relation between the space of tubes and Waldhausen's algebraic K-theory of spaces may be phrased in terms of a homotopy cartesian square:
\[ \xymatrix{
\cT_\infty \ar[d] \ar[r] & BG \ar[d]\\
\Omega^\infty S^\infty  \ar[r]           & A(\ast)   } \]
The map $\cT_\infty \to BG$ is the map which to a bundle of tubes assigns the underlying spherical fibration. Waldhausen showed that $BG$ can also be thought of as the space of stable tubes in the topological category, so that the map $\cT_\infty \to BG$ is simply the forgetful map from the smooth category to the topological category. The map $\cT_\infty \to \Omega^\infty S^\infty$ is the {\em Waldhausen derivative}, which we will define in Section \ref{sec: wald der}.

The assignment $E \to T_E$ yields a map $BO(k,N) \to \cT_{k,N}$ which stabilizes to a map $BO(k) \to \cT_k$, which in turn stabilizes to a map $BO \to \cT_\infty$ known as the  {\it rigid tube map}, and which was first studied by Goodwillie and Waldhausen. Let $O$ be the stable orthogonal group, $G$ the space of stable homotopy equivalences of spheres, $J:O \to G$ the $J$-homomorphism and $\cH_\infty$ the space of stable h-cobordisms of a point (i.e. the space of h-cobordisms of an $n$-disk suitably stabilized in $n$).  It was also shown by Waldhausen \cite{W} that the rigid tube map fits into a commutative diagram where the columns are fibration sequences:
\[ \xymatrix{
G/O \ar[d] \ar[r] & \cH_\infty \ar[d] \\
BO \ar[r] \ar[d]          & \cT_\infty \ar[d]  \\
BG \ar[r]^= & BG} \]

The first column comes from the $J$-homomorphism. The map $\cH_\infty \to \cT_\infty$ consists of adding an h-cobordism on top (or to the side) of a rigid tube. The map $G/O \to \cH_\infty$ is the Hatcher map, which is a rational homotopy equivalence by a theorem of B\"okstedt \cite{B}. The bottom map is the identity, hence the rigid tube map $BO \to \cT_\infty$ is also a rational homotopy equivalence. This fact was used in an essential way in \cite{ACGK} for the proof of Theorem \ref{thm:existence}, which we rely on for our applications. 

We also mention here that further study of the fibration sequence $\cH_\infty \to \cT_\infty \to BG$ and an application of the plus construction lead to a proof of Waldhausen's {\em parametrized stable h-cobordism theorem}, see \cite{W}. We will not need this result for the purposes of the present article, but we mention it since we believe it may be possible to extract further mileage in symplectic topology from this mostly unexplored connection.

\subsection{The classifying space of tube bundles}

The crucial property from \cite{W} which we will need for our applications is that $\cT_\infty = \lim_{k,N} \cT_{k,N}$ is the classifying space for stable tube bundles over manifolds. We explain the significance of this statement.

First, $\cT_{k,N}$ is the classifying space for $(k,N)$-tube bundles over compact manifolds. Explicitly, this means that for $M$ compact there is a one-to-one correspondence between homotopy classes of maps $M \to \cT_{k,N}$ and $(k,N)$-tube bundles $W \to M$ up to equivalence, where two tube bundles $W_0$ and $W_1$ over a manifold $M$ are said to be equivalent if there exists a fibrewise diffeomorphism:
\[
\xymatrixcolsep{5pc}\xymatrix{W_0 \ar[rr] \ar[dr] & & W_1 \ar[dl]  \\  & M  &  
} \]

Hence for every $(k,N)$-tube bundle $W \to M$ we get a well-defined homotopy class of maps $M \to \cT_{k,N}$. After stabilizing, we deduce that to every tube bundle $W \to M$ is assigned a class in $[M, \cT_\infty]$ which classifies $W$ up to {\it stable equivalence}, i.e. up to equivalence after arbitrary stabilization. Conversely, every homotopy class of maps $M \to \cT_\infty$ can be realized by a tube bundle which is unique up to stable equivalence. 

For $W \to M$ a tube bundle and $\fF$ a framing of $M$, it is straightforward to check that the framed bordism class $\beta(W, \fF)$ only depends on the stable equivalence class of $W$. Thus we may think of $\beta(\cdot , \fF)$ as a map $[M, \cT_\infty ] \to \Omega^{\text{fr}}_n$. For simplicity, consider the case where $M$ is a homotopy sphere $\Sigma$. Up to homotopy there is a unique orientation preserving homotopy equivalence $S^n \to \Sigma$, so we may safely identify $[\Sigma, \cT_\infty]$ with $\pi_n \cT_\infty$. Hence to a tube bundle $W \to \Sigma$ is assigned a class $\alpha \in \pi_n \cT_\infty$, which we call the {\it classifying element}. For $\fF$ a fixed framing of $\Sigma$, the resulting map $\pi_n \cT_\infty \to \Omega^{fr}_n$ is easily seen to be a group homomorphism. Indeed, we may always assume our tubes and fibrewise vector fields to be standard near a point.

\subsection{Preliminary computations in framed bordism}\label{sec:prel}

For some classes in $\pi_n \cT_\infty$ the associated tube bundles are not so interesting from the viewpoint of framed bordism. For example, let $\gamma \in \pi_n BO$. Given a framing $\fF$ of a homotopy sphere $\Sigma$ we denote by $\fF_\gamma$ the framing $\fF$ twisted by the image of $\gamma$ under the homomorphism $\pi_nBO \to \pi_nO$ induced by the map $BO \to O$ which comes from the fibration $O \to U \to U/O$ after applying Bott periodicity to $\Omega (U/O)  \to O$ as in \cite{G}. This map can be described more explicitly as follows. Recall that $BO(k,N)$ is the space of $k$-dimensional linear subspaces of $\bR^N$. To each $E \in BO(k,N)$ we assign the rigid motion of $\bR^N$ given by reflection on $E$. This is an element of $O(N)$. Stabilizing we get a map $BO \to O$.


\begin{lemma}\label{prop:BO} Let $\Sigma$ be a homotopy sphere and let $\gamma \in \pi_nBO$. Denote by $W_\gamma \to \Sigma$ the tube bundle whose classifying element $\alpha \in \pi_n \cT_\infty$ is the image of $\gamma \in \pi_nBO$ under the homomorphism $\pi_n BO \to \pi_n \cT_\infty$ induced by the rigid tube map. Then $\beta(W_\gamma,\fF)=(\Sigma, \fF_\gamma)$ for all framings $\fF$ of $\Sigma$. \end{lemma}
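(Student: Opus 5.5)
Since $\gamma$ lies in the image of the rigid tube map, we may take $W_\gamma$ to be the rigid tube bundle $T_E \to \Sigma$ associated to a vector bundle $E \to \Sigma$ (a subbundle of the trivial $\bR^N$ bundle) classified by $\gamma$. Lemma \ref{lem:indep} lets us compute $\beta(W_\gamma,\fF)$ with any convenient fibrewise vector field satisfying (1), (2) \& (3). We will use the fibrewise gradient of the standard function $f$ of Section 2.3, which has exactly one critical point in each fibre. Near that point $f$ is the quadratic form $Q_E(v)=-|\pi_E v|^2+|\pi_{E^\perp}v|^2$ (taking $\bR^{N+1}=\bR^N\oplus\bR$, with the last coordinate in the positive part). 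Away from the critical point we modify it so that $\nabla_T f$ points outward along $\partial T_E$ and equals $\partial_{N+1}$ near infinity; the standard handle model permits exactly this. Then $X=\nabla_T f$ has $X^{-1}(0)$ equal to the zero section of critical points, which is a copy of $\Sigma$. Equivalently, by Lemma \ref{lem:rep}, $\beta(W_\gamma,\fF)$ is represented by the zero section of $J^1\Sigma$ with its induced framing $\fF_f$. So the manifold is $\Sigma$, and everything reduces to identifying the framing.

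At a critical point $m$, the normal bundle of $X^{-1}(0)$ in $W$ is the fibre $\bR^{N+1}$. The framing it receives is the pullback of the canonical basis $\mathcal{B}$ under $dX=\mathrm{Hess}(Q_E)$, which is the symmetric matrix $2(-\pi_E+\pi_{E^\perp})$ on $\bR^N$, extended by $1$ on the last coordinate. Up to the positive scalar $2$, which can be removed by a canonical homotopy, this is precisely the reflection $R_E\in O(N)\subset O(N+1)$ in $E$. Hence the framing of the normal bundle is the canonical basis twisted by the map $\Sigma\to O$, $m\mapsto R_{E_m}$. By the explicit description given just before the statement, this map represents the image of $\gamma$ under $\pi_nBO\to\pi_nO$. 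Combining it with $\fF$ as in the definition of $\fF_X$ then gives the stable framing $\fF_\gamma$, so $(X^{-1}(0),\fF_X)=(\Sigma,\fF_\gamma)$.

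The main obstacle is the bookkeeping that makes this identification an equality of framings, not merely an equality up to homotopy of maps to $O$. Two points need care. First, the identification of the stable normal bundle of $X^{-1}(0)$ with $\nu_{W}\oplus\nu_\Sigma$ must be compatible with stabilization $N\to N+1$; under that stabilization the reflection map is preserved, since the added coordinate lies in $E^\perp$. Second, the convention that twisting by a map to $O$ agrees with the convention used to define $\fF_\gamma$ (via $BO\to O$ from Bott periodicity as in \cite{G}). Here I would check that the Bott-periodicity description agrees with the reflection description the paper already gives. If a sign or inverse discrepancy arises, it is harmless, because the reflections $R_E$ are involutions. A final minor check is that the chosen $f$, modified away from the critical point, introduces no extra zeros of $\nabla_T f$. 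This holds by the standard Morse model on the handle: $T_E$ retracts onto the half-space with one handle of index $k$ attached, so a single critical point suffices.
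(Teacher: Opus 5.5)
Your proposal is correct and follows the same route as the paper's proof. In both, you realize $W_\gamma$ (up to stabilization) as a rigid tube bundle and use the fibrewise gradient of the standard function, which has a single nondegenerate zero per fibre, so that $X^{-1}(0)$ is a section identified with $\Sigma$; you then read off the framing from $dX$ at that zero as the reflection in $E_m$ defining $\pi_nBO\to\pi_nO$. Your extra bookkeeping (stabilization, the Bott-periodicity convention, and the Hessian sign, where your negative-on-$E$ choice is the one the gradient actually produces while the paper's displayed linear map differs from it by a constant element of $O(N+1)$) goes beyond the paper, which simply asserts that $\fF_X$ agrees with $\fF_\gamma$ ``by inspection''.
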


\begin{proof} 
A tube bundle whose classifying element is in the image of $\pi_n BO \to \pi_n \cT_\infty$ is up to stabilization the same thing as a bundle of rigid tubes, all of which admit a standard generating function whose gradient gives a fibrewise gradient vector field $X$ with one unique non-degenerate zero on each fibre. Then $X^{-1}(0)$ is a section of $W$, hence diffeomorphic to $\Sigma$. Without loss of generality we may assume that near the critical point $X$ is the linear map $(x_{E}, x^{\perp}_{E},z) \mapsto (x_{E} ,- x^{\perp}_E,z) $, where $(x_E,x_E^{\perp},z) \in E \oplus E^{\perp} \oplus \bR$. Hence $dX$ is given by the same formula. Hence in this case the framing $\fF_X$ is therefore by inspection the same as $\fF_\gamma$, so we conclude $\beta(W,\fF)=(X^{-1}(0),\fF_X)=(\Sigma,\fF_\gamma)$. \end{proof}


Even more trivial from the viewpoint of framed bordism are those tube bundles which come from pseudo-isotopy theory.

\begin{lemma}\label{prop:weird} Let $\Sigma$ be a homotopy sphere and $W \to \Sigma$ a tube bundle. If the classifying element $ \alpha \in \pi_n \cT_\infty$ is in the image of the map $\pi_n \cH_\infty \to \pi_n \cT_\infty$, then $\beta(W, \fF)=(\Sigma, \fF)$ in $\Omega^{fr}_n$ for all framings $\fF$ of $\Sigma$. \end{lemma}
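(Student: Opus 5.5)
Since $\beta(\cdot,\fF)\colon \pi_n\cT_\infty \to \Omega^{fr}_n$ depends only on the stable equivalence class of the tube bundle, we may replace $W$ by any stably equivalent tube bundle. The plan is to choose a convenient representative whose fibres are rigid tubes with an h-cobordism glued on top, and then to exhibit on it a fibrewise vector field with exactly one non-degenerate zero in each fibre, arranged so that the induced framing is visibly $\fF$.

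\textbf{Step 1: a convenient model for $W$.} Since $\alpha$ lies in the image of $\pi_n\cH_\infty \to \pi_n\cT_\infty$, and that map is defined by adding an h-cobordism on top of a rigid tube, we may assume (after stabilizing) the following structure.
\begin{itemize}
\item $W$ is obtained from the trivial rigid tube bundle $\Sigma \times T_E$, with $E$ a fixed subspace, by gluing a family of h-cobordisms.
\item The gluing takes place along a family of discs in the boundary of the handle, away from the region near the core of the handle.
\item We have a family $\{H_m\}_{m\in\Sigma}$ of h-cobordisms of $D^{N}$ (or of $D^{N-1}\times$ the relevant boundary piece), each trivial near its incoming boundary.
\end{itemize}
Each fibre is then $T_m = T_E \cup H_m$. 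The key point is that the base tube is trivial as a bundle, and all the twisting is confined to a region that can be taken disjoint from a fixed neighbourhood of the core of $T_E$.

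\textbf{Step 2: construct the vector field fibrewise.} First take the standard gradient field $X_0$ on $T_E$, with its unique non-degenerate zero $c$ at the centre of the handle. Then extend $X_0$ over each $H_m$ by a non-vanishing vector field that is outward pointing along the new boundary and agrees with $X_0$ along the gluing region.

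Such an extension exists fibrewise because an h-cobordism $H$ from $\partial_0H$ to $\partial_1H$ admits a nowhere-vanishing vector field that is inward along $\partial_0$ and outward along $\partial_1$. This follows from the vanishing of the relative Euler characteristic $\chi(H,\partial_0H)=0$, or directly from a collar flow.

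\textbf{Step 3: make the extension work in families.} This is the main obstacle. The extension must be chosen continuously over the whole sphere $\Sigma$. The obstruction to extending a nowhere-zero section over the family lies in the homotopy of the space of non-vanishing "outward" vector fields on $H_m$ rel $\partial_0$. I would try to show this space is contractible, or at least highly connected in the stable range, using the following idea:
\begin{itemize}
\item Non-vanishing vector fields on a manifold pointing outward along the top boundary and inward along the bottom correspond, up to homotopy, to sections of the unit tangent bundle with fixed boundary behaviour.
\item For an h-cobordism, which is stably a disc after the stabilizations defining $\cH_\infty$, the fibre $S^{N}$ of the unit tangent bundle is highly connected compared to $n$.
\end{itemize}
Hence all obstructions in degrees $\le n+1$ vanish once $N$ is large.

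Alternatively, one can avoid that connectivity argument and instead use that every h-cobordism space admits a canonical map to a space of non-singular functions (via "generalized Morse functions without critical points" on stable h-cobordisms). In that case the extension comes from a fibrewise function without critical points on $H_m$, matched with the standard function on $T_E$.

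\textbf{Step 4: conclude.} The resulting field $X$ satisfies conditions (1), (2) and (3). Its zero set $X^{-1}(0)$ is the section $\Sigma\times\{c\}$, on which $dX$ is the fixed linear map $(x_E,x_E^\perp,z)\mapsto(x_E,-x_E^\perp,z)$, constant in $m$. The induced framing $\fF_X$ is therefore $\fF$ twisted by a constant element, i.e.\ by the image of the constant map $\Sigma\to BO$, which is null. So $\fF_X=\fF$ up to homotopy, and Lemma~\ref{lem:indep} gives $\beta(W,\fF)=(\Sigma,\fF)$.
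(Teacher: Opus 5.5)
Your overall architecture matches the paper's: a rigid tube with h-cobordisms glued on top, and a fibrewise field whose only zeros are the handle centres, so that $X^{-1}(0)$ is a section along which $dX$ is constant and $\fF_X=\fF$. The gap is in Step 3, which you rightly flag as the crux: neither of your two suggestions proves it.

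\textbf{Suggestion (a) fails.} Consider nowhere-zero fields on $H\cong D^N\times[0,1]$ that are standard near the bottom and sides and outward along the top. The outward condition is a contractible choice, so up to homotopy these are sections of the unit tangent bundle rel the whole boundary, i.e.\ $\Omega^{N+1}S^N$. Its $\pi_k$ is $\pi_{k+N+1}(S^N)\cong\pi^s_{k+1}$. The $(N-1)$-connectivity of $S^N$ is exactly cancelled by the $(N+1)$-dimensionality of the fibre; already $\pi_0\cong\pi_{N+1}(S^N)\cong\bZ/2$. Suppose the family over $\Sigma\cong S^n$ is clutched by a family $\psi_z$, $z\in S^{n-1}$, of concordances of $D^N$. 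Then the obstruction to your extension is the class of $z\mapsto(\psi_z)_*\partial_t$ in $\pi_{n-1}\Omega^{N+1}S^N\cong\pi^s_n$. This is precisely the group where $\beta$ takes values, and nothing in your argument shows the class vanishes.

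\textbf{Suggestion (b) is false.} Let $f$ be a fibrewise function without critical points on a bundle of h-cobordisms, with the two ends as its extremal level sets and standard near the sides. The flow of $\nabla f/|\nabla f|^2$ then trivializes the bundle. So such functions exist only on trivial h-cobordism bundles; equivalently, by Cerf, the space of such functions is the concordance space, which is far from contractible. Your claim would therefore force $\pi_n\cH_\infty=0$. This contradicts B\"okstedt's theorem that the Hatcher map $G/O\to\cH_\infty$ is a rational equivalence, which makes $\pi_n\cH_\infty$ infinite for $n\equiv 0 \pmod 4$.

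\textbf{What is missing.} You need exactly the result the paper imports: every bundle of fibrewise trivial h-cobordisms admits a fibrewise non-vanishing vector field pointing inward at the lower end and outward at the upper end (Eliashberg--Mishachev \cite{EM}, Proposition 2.5, going back to Laudenbach and Douady \cite{L}). This is genuinely nontrivial, precisely because such fields cannot in general be chosen gradient-like. With that citation in place of Step 3, your Steps 1, 2 and 4 complete the proof as in the paper.
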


\begin{proof} 
Any bundle of trivial h-cobordisms admits a fibrewise non-vanishing vector field pointing inwards at the lower end and outwards at the upper end, see Proposition 2.5 of \cite{EM} for a proof, though the argument goes back to Laudenbach and Douady \cite{L}. Hence a tube bundle in the image of $\pi_n \cH_\infty$ also admits a fibrewise vector field $X$ with one unique non-degenerate zero on each fibre. Hence as before $X^{-1}(0)$ is a section, and additionally the framing can now be taken to be constant. \end{proof}

We deduce a constraint on the framed bordism class of Legendrian homotopy spheres admitting generating functions on these special tube bundles.

\begin{proposition}\label{cor:weird} Let $\Sigma_0$ and $\Sigma_1$ be homotopy spheres and let $\Sigma_0 \subset J^1 \Sigma_1$ be a Legendrian embedding which admits a generating function $f:W \to \bR$ on a tube bundle $W \to \Sigma$. Suppose that the classifying element $ \alpha \in \pi_n \cT_\infty$ lies in the subgroup generated by the images of $\pi_n BO$ and $\pi_n \cH_\infty$. Then $\Sigma_0$ and $\Sigma_1$ are equal in $\Theta_n/bP^{n+1} \subset \text{coker}(J_n)$.
\end{proposition}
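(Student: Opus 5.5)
Fix a framing $\fF$ of $\Sigma_1$ (homotopy spheres are stably parallelizable). By Lemma \ref{lem:rep}, the Legendrian $\Sigma_0$ with its induced framing $\fF_f$ represents $\beta(W,\fF)$ in $\Omega^{fr}_n$. The image of $(\Sigma_0,\fF_f)$ in $\coker(J_n)$ is the class of $\Sigma_0$, because the framed bordism class of a homotopy sphere is well defined modulo $\text{im}(J_n)$ whatever framing is used. Likewise, the image of $(\Sigma_1,\fF)$ is the class of $\Sigma_1$. So it suffices to prove
$$\beta(W,\fF) - (\Sigma_1,\fF) \in \text{im}(J_n) \subset \pi_n^s .$$

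To compute $\beta(W,\fF)$, write $\alpha = \iota_*\gamma + h$, with $\gamma\in\pi_nBO$ pushed forward by the rigid tube map $\iota$, and $h$ in the image of $\pi_n\cH_\infty$. The first step is additivity of $\beta(\cdot,\fF)$ on $\pi_n\cT_\infty$ with $\fF$ fixed, which the paper notes follows by making tubes and vector fields standard near a point. This gives
$$\beta(W,\fF)=\beta(W_{\iota_*\gamma},\fF)+\beta(W_h,\fF)-\beta(W_0,\fF),$$
where $W_0$ is the trivial rigid tube bundle. Lemma \ref{prop:BO} applied to $\gamma$ and to $0$ gives $(\Sigma_1,\fF_\gamma)$ and $(\Sigma_1,\fF)$, and Lemma \ref{prop:weird} gives $(\Sigma_1,\fF)$ for $W_h$. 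Hence $\beta(W,\fF)=(\Sigma_1,\fF_\gamma)$.

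It remains to see that $(\Sigma_1,\fF_\gamma)-(\Sigma_1,\fF)$ lies in $\text{im}(J_n)$. Changing the framing of a homotopy sphere by an element $\delta\in\pi_nO$ changes its Pontryagin--Thom class by $\pm J_n(\delta)$. This is the standard fact underlying the Kervaire--Milnor map $\Theta_n\to\coker(J_n)$ recalled in the introduction: twisting the framing on a disk amounts to taking a connected sum with the sphere framed by $\delta$. Here $\delta$ is the image of $\gamma$ under $\pi_nBO\to\pi_nO$, so the difference is $\pm J_n(\delta)$. This proves $[\Sigma_0]=[\Sigma_1]$ in $\coker(J_n)$, and since $\Theta_n/bP_{n+1}\to\coker(J_n)$ is injective, the two spheres agree in $\Theta_n/bP_{n+1}$.

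The main obstacle is the additivity step. The connected-sum argument involves a twisted framing and non-rigid tubes, and one must check that gluing representatives near a base point realizes the sum in $\pi_n\cT_\infty$ compatibly with $\beta$. I would handle this by taking representatives whose tubes and vector fields are standard (rigid, with one nondegenerate zero) over a disk, pinching $S^n\to S^n\vee S^n$, and observing that the zero sets glue to a fibrewise boundary connected sum of framed manifolds, which represents the sum in $\Omega^{fr}_n$.
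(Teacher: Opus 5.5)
Your proof is correct and follows the same route as the paper's. Lemma \ref{lem:rep} identifies $(\Sigma_0,\fF_f)$ with $\beta(W,\fF)$, Lemmas \ref{prop:BO} and \ref{prop:weird} give $\beta(W,\fF)=(\Sigma_1,\fF_\gamma)$, and a change of framing moves the class only within $\text{im}(J_n)$. The paper cites the two lemmas jointly for a mixed class $\alpha=\iota_*\gamma+h$, whereas you spell out the combination via $\beta(W,\fF)=\beta(W_{\iota_*\gamma},\fF)+\beta(W_h,\fF)-\beta(W_0,\fF)$. That affine form is the accurate one, since $\beta(\cdot,\fF)$ is really a homomorphism plus the constant $(\Sigma_1,\fF)$, cf.\ Lemma \ref{lem:trick}.
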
 

\begin{proof}
Let $\fF^1$ be a framing of $\Sigma_1$ and let $\fF^0$ be the induced framing of $\Sigma_0$. we have $\beta(W, \fF^1) = (\Sigma_0, \fF^0_f)$ by Lemma \ref{lem:rep}. But we also have $\beta(W, \fF^1) = (\Sigma_1, \fF^1_\gamma)$ for some $\gamma \in \pi_nBO$ by Lemmas \ref{prop:BO} and \ref{prop:weird}. Hence in $\Theta_n/bP_{n+1} \subset \text{coker}(J_n)$ we deduce $[\Sigma_0]=(\Sigma_0, \fF^0_f) + \text{im}(J_n) = (\Sigma_1, \fF^1_\gamma) + \text{im}(J_n) = [\Sigma_1]$.
\end{proof}

Theorem \ref{thm:existence} gives no constraint on the possible class of the tube bundle in $\pi_n \cT_\infty$, therefore we see no obvious application of Proposition \ref{cor:weird} to the problem at hand. Instead, we will use the map $\cT_\infty \to BG$ to constrain the framed bordism classes which arise as $\beta(W,\fF)$ for $W \to \Sigma$ an arbitrary tube bundle and $\fF$ a framing of a homotopy sphere $\Sigma$.


\section{The Waldhausen derivative}\label{sec: wald der}

\subsection{The derivative on the space of tubes}

Take a homotopy class $ \alpha \in \pi_n \cT_\infty$, which can be represented by a tube bundle $W \to S^n$ where the fibres are tubes $T_z$ in a fixed $\bR^{N+1}$, parametrized by $z \in S^n$. We may assume that all the tubes are standard when $x_{N+1} \leq -1$, say, i.e. $T_z \cap \{ x_{N+1} \leq -1 \} =  \{ x_{N+1} \leq -1 \} $ for all $z \in S^n$. Then the outwards normal $n_z$ to $\partial T_z$ extends to a nonvanishing fibrewise vector field $X_z$ on $T_z \cap \{x_{N+1} \geq -1 \}$, equal to $X_z=\partial_{N+1}$ outside a compact set. The existence and uniqueness up to homotopy of this extension are guaranteed by stabilizing in $N$ as much as needed. 

Restricting $X_z$ to the hyperplane $\{x_{N+1}=-1\}$ gives a map $S^N \to S^N$ on each fibre, which has degree $\pm 1$. Hence we obtain a homotopy class $D \alpha \in \pi_n G$, where $G$ is the space of stable homotopy equivalences of spheres. 

\begin{definition} The map $D: \pi_n \cT_\infty \to \pi_n G$ is called the {\it Waldhausen derivative}. \end{definition}

Recall that there is a natural isomorphism $\pi_n G \simeq \pi_n^s$ under which the $J$-homomorphism $J_n:\pi_n O \to  \pi_n^s$ is the map on homotopy groups $\pi_n O \to \pi_n G$ induced by the map $O \to G$. So we may think of $D \alpha $ as an element of $\pi_n^s$ via this isomorphism if we wish to. Denote by $\fF^{0}$ the canonical framing of $S^n$ as the boundary of the unit disk in $\bR^{n+1}$.

\begin{lemma}\label{lem:thom} The Waldhausen derivative $D \alpha \in \pi_n^s$ of the classifying element $\alpha \in \pi_n \cT_\infty$ of a tube bundle $V \to S^n$ corresponds to $\beta(V, \fF^{0}) \in \Omega^{fr}_n$ under the Pontryagin-Thom isomorphism. \end{lemma}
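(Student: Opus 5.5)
The plan is to build a single fibrewise vector field $X$ on $V$ that satisfies conditions (1), (2) and (3) and whose zero set, with its induced framing, is visibly the Pontryagin--Thom preimage of the family of maps $S^N \to S^N$ defining $D\alpha$.

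Represent $\alpha$ by a tube bundle $V \to S^n$ with fibres $T_z$, standard in the region $\{x_{N+1}\le -1\}$. As in the definition of $D$, let $X_z$ be the nonvanishing extension of the outward normal to $T_z \cap \{x_{N+1}\geq -1\}$. Write $\phi_z = X_z|_{\{x_{N+1}=-1\}}$, viewed as a map of the one-point compactification $S^N$ of $\bR^N\times\{-1\}$ into $S^N$; this is standard at infinity, where it equals $\partial_{N+1}$. The family $z\mapsto \phi_z$ represents $D\alpha\in\pi_nG$.

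Next, on the standard half-space $\{x_{N+1}\le -1\}$, extend $X$ by interpolation. In a collar $-2\le x_{N+1}\le -1$, homotope $\phi_z$ back to a fixed standard map: use a nullhomotopy-free construction, namely $X(x,t)=\phi_z(x)$ rescaled and combined with $-(t+1)\,\partial_{N+1}$-type terms, so that the zeros of $X$ in the slab are exactly the transverse preimage of a regular value, e.g. $-\partial_{N+1}$ direction, under $\phi_z$. Concretely, take $X(x,x_{N+1})=\phi_z(x)+\lambda(x_{N+1})\partial_{N+1}$ for a suitable cutoff $\lambda$ with $\lambda=0$ at $x_{N+1}=-1$, and make $X=\partial_{N+1}$ for very negative $x_{N+1}$ and at horizontal infinity. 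Then $X$ is outward pointing along $\partial T_z$, equals $\partial_{N+1}$ outside a compact set, and has zeros only in the slab. These zeros are exactly the points where $\phi_z$ takes the value $-\lambda\,\partial_{N+1}$, i.e.\ the preimage of the antipode of the basepoint direction.

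After a small perturbation, $0$ is a regular value. Then $X^{-1}(0)\subset S^n\times\bR^{N+1}$ is the framed submanifold obtained by applying Pontryagin--Thom to the adjoint map $S^n\times S^N\to S^N$, i.e.\ $(z,x)\mapsto\phi_z(x)$. The framing by $dX^{-1}\mathcal{B}$, combined with the framing $\fF^0$ of $S^n$ as the boundary of the disk, matches the standard framing used in the Pontryagin--Thom construction. The standard identification $\pi_nG\cong\pi_n^s$ sends a family $\phi$ to the stable class of $S^{n}\times S^N\to S^N$, restricted and collapsed appropriately; this is a Hopf-construction or adjoint identification, and the framing $\fF^0$ is what makes the $S^n$ factor contribute trivially. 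Hence $\beta(V,\fF^0)$, which by Lemma \ref{lem:indep} can be computed with this $X$, equals $D\alpha$.

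The main obstacle is the bookkeeping in this last identification. One must check that the adjoint correspondence $\pi_n\mathrm{Map}_{\pm1}(S^N,S^N)\to\pi_{n+N}(S^N)$, whose zero-set description uses $S^n\times\bR^N$ with the boundary framing $\fF^0$, agrees with the isomorphism $\pi_nG\simeq\pi_n^s$ under which $J$ corresponds to $O\to G$. This amounts to checking that the degree-one component shift (from $G$ rather than $SG$) does not introduce an extra term. I would handle it by comparing both sides on the case of rigid tubes: Lemma \ref{prop:BO} combined with the known description of $J$ fixes any sign or normalization ambiguity, and the homomorphism property then gives the general case.
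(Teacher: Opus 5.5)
Your construction is the paper's argument made explicit. The paper takes the fibrewise field that is nonvanishing on $\{x_{N+1}\geq -1\}$ and notes two things: its restriction to $\{x_{N+1}=-1\}$ represents $D\alpha$, and its zero set computes $\beta(V,\fF^0)$. It then treats the identification of the latter with the Pontryagin--Thom class of (the suspension of) $D\alpha$ as unraveling definitions. Your extension, with zeros over the preimage of $-\partial_{N+1}$, is a good way to see this. The formula $\phi_z+\lambda\partial_{N+1}$ still has to be interpolated to $\partial_{N+1}$ below the slab and near horizontal infinity. Once $\lambda>\max|\phi_z|$ its last component is positive, so this creates no new zeros.

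The step that does not work is your proposed resolution of the bookkeeping. Both sides of the lemma depend on $\alpha$ only through the family $\phi$. So what must be compared are two homomorphisms $\pi_nG\to\Omega^{fr}_n$: the standard isomorphism and your zero-set construction. Rigid tubes test these only on $D$ of the image of $\pi_nBO$. That image consists of families of orthogonal reflections, so it lies in $\text{im}(J_n)$, and it is often zero. Agreement of two homomorphisms on a subgroup says nothing about the rest, so the homomorphism property gives no ``general case''.

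The identification has to be made directly, and your remark about $\fF^0$ is the right idea. Let $g=\phi_{z_0}$ and replace $\phi_z$ by the loop difference $\psi_z=\phi_z\cdot\overline{g}$, obtained by pinching the source $S^N$. This family lies in the degree-zero component. After a homotopy making $\psi_{z_0}$ constant, its adjoint $S^n\wedge S^N\to S^N$ is the standard representative of $D\alpha$ in $\pi_{n+N}S^N$ (up to the usual sign convention when $g$ has degree $-1$). The preimage of the south pole under $\psi$ is your zero set together with $S^n\times g^{-1}(\text{south pole})$. The latter is a union of copies of $S^n$, each framed by $\fF^0$ and a constant frame, so each bounds a disk. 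Moreover, on $(S^n\setminus z_0)\times\bR^N\cong\bR^{n+N}$ the framing $\fF^0$ is homotopic to the standard one, because $S^n\setminus z_0$ is contractible. This shows that no extra term appears from working in a $\pm1$ component, without any appeal to rigid tubes.
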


\begin{proof}
This is a matter of unraveling definitions. Take a fibrewise vector field $X$ on the tube bundle classified by $\alpha$ which has no zeros on $\{x_{N+1} \geq -1\}$, is outwards pointing along $\partial T$ and is standard at infinity. Then the restriction of $X$ to $\{x_{N+1}=-1\}$ is the Waldhausen derivative $D \alpha$ and the zero set $(X^{-1}(0), \fF^{0}_X)$ is a representative for $\beta(V, \fF^{0})$. So we identify $\beta(V, \fF^{0})$ as the Thom-Pontryagin construction applied to (the suspension of) $D \alpha$. 
\end{proof}

\subsection{The addition formula}

Next we interpret the Waldhausen derivative from the viewpoint of an arbitrary homotopy sphere. We begin with a decomposition lemma.

\begin{lemma}\label{lem:div}
Let $W \to \Sigma$ be a tube bundle over a homotopy sphere $\Sigma$ and let $\rho: S^n \to \Sigma$ be a homotopy equivalence. Then $W$ is stably equivalent to the fibre connect sum of $\rho^*(W)$ with the trivial tube bundle over $\Sigma$. 
\end{lemma}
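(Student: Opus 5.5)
We need to interpret the statement first. The pullback $\rho^*(W)$ is a tube bundle over $S^n$, and $\Sigma = S^n \#\Sigma$ up to diffeomorphism. More precisely, $\Sigma$ is obtained by gluing two disks via some diffeomorphism of $S^{n-1}$, and $S^n$ likewise with the identity. The fibre connect sum of a tube bundle $V\to S^n$ with a tube bundle $W'\to\Sigma$ is then a tube bundle over $S^n\#\Sigma\cong\Sigma$. It is built by assuming both bundles are standard (trivialized, with the standard rigid tube as fibre) over small disks, removing those disks, and gluing along the boundary spheres using the trivializations. The claim is that $W$ is stably equivalent to $\rho^*W \# (\Sigma\times T_0)$, where $T_0$ is a fixed standard tube.

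The plan is to work entirely at the level of classifying maps, using that stable equivalence classes of tube bundles over $\Sigma$ correspond to $[\Sigma,\cT_\infty]$. First fix a disk $D\subset\Sigma$ and choose $\rho$ so that it is a diffeomorphism from the closed complement of a disk in $S^n$ onto a neighbourhood of $\Sigma\setminus \mathrm{int}(D)$. Such a representative exists up to homotopy because the collapse map $\Sigma\to\Sigma/(\Sigma\setminus\mathrm{int}D)\cong S^n$ has degree one, and its homotopy inverse can be arranged this way. Any orientation-preserving homotopy equivalence is homotopic to this one, and $\rho^*W$ depends only on the homotopy class of $\rho$ up to equivalence. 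So we may take $\rho$ to be the inverse of the collapse, viewed as a map $S^n\to\Sigma$ that is a diffeomorphism onto the complement of $D$ away from a small disk $D'\subset S^n$, and that sends $D'$ to $D$ by some degree-zero-relative map.

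Next, I would trivialize $W$ over the disk $D$, which is possible since $D$ is contractible. After a homotopy of the classifying map $c\colon\Sigma\to\cT_\infty$, I can make $c$ constant at the standard tube on a slightly smaller disk, and equally constant on a disk in the complement. Then $c\circ\rho$ classifies $\rho^*W$, and the fibre connect sum $\rho^*W\#(\Sigma\times T_0)$ over $S^n\#\Sigma\cong\Sigma$ is classified by the map that equals $c\circ\rho$ on the $S^n$ side and is constant on the $\Sigma$ side. Here the connect sum is performed in the region where $c\circ\rho$ is constant. Under the diffeomorphism $S^n\#\Sigma\cong\Sigma$, which is the identity on the $\Sigma$ piece minus a disk and absorbs the $S^n$ piece minus a disk into a disk, this classifying map is homotopic to $c$. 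The reason is that $\rho$ restricted to the complement of $D'$ is essentially the identity onto the complement of $D$, and on the remaining disk both maps land in the contractible region where $c$ is constant.

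The main obstacle is the bookkeeping of this last identification. One has to check that the diffeomorphism $S^n\#\Sigma\to\Sigma$ and the map $\rho$ are compatible up to homotopy relative to the region where $c$ is nonconstant. One must also check that the gluing of tube bundles in the fibre connect sum, performed using standard trivializations, matches the gluing implicit in the pullback. I expect this to reduce to the fact that the space of trivializations over a disk is connected after stabilization, which is where stabilization in $N$ and $k$ enters, justifying "stably equivalent" rather than equivalent.
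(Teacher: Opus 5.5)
Your strategy is the paper's: reduce to classifying maps, make $c$ constant on a disk, and show that the classifying map of $\rho^*W\#(\Sigma\times T_0)$, transported to $\Sigma$, is homotopic to $c$. That last step is the whole content of the lemma; it is the paper's closing assertion that both bundles have classifying element $\alpha$. Your justification of it, which you yourself leave as the main obstacle, does not work as written.

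With the identification $S^n\#\Sigma\cong\Sigma$ you specify (identity on the $\Sigma$-piece, $S^n$-piece absorbed into a disk $D_\Sigma$), the transported map is constant on all of $\Sigma\setminus D_\Sigma$. There $c$ is in general not constant. The reason you give (``$\rho$ is essentially the identity off $D'$, and both maps are constant on the remaining disk'') describes a different identification: one that equals $\rho$ on the $S^n$-piece and absorbs the $\Sigma$-piece into $D$. That one would give equality with $c$ on the nose, provided the sum is taken along $D'$ and $c$ is constant on all of $D\supset\rho(D')$, not just on a smaller disk. But producing it needs a diffeomorphism from $\Sigma\setminus\mathrm{int}(D_\Sigma)$ onto $D$ extending a prescribed boundary map, i.e.\ a disk-theorem argument you do not give.

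Relatedly, your $\rho$ is a diffeomorphism from the complement of a disk onto a neighbourhood of $\Sigma\setminus\mathrm{int}(D)$. This presupposes that region is a standard disk, which is true for $n\neq 4$ and unknown for $n=4$. The degree of the collapse map does not provide it. The paper only asks $\rho$ to be a diffeomorphism onto a small disk $D_1$, and uses only contractibility of the complement $D_2$.

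The missing idea is a degree argument, which makes this bookkeeping unnecessary. Pick $x\in S^n$ and a disk $D$ around $\rho(x)$ on which $c$ has been homotoped to be constant at $T_0$. Pick a small disk $B\ni x$ with $\rho(B)\subset D$, and form the connected sum along $B$. Let $q\colon S^n\#\Sigma\to S^n$ be the identity on $S^n\setminus\mathrm{int}(B)$ and send the $\Sigma$-piece into $B$ compatibly on the boundary; this is possible since $B$ is contractible.

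Since $c\circ\rho$ is constant on $B$, the classifying map $c'$ of the connected sum is exactly $c\circ\rho\circ q$, and $q$ has degree one. So for any orientation-preserving diffeomorphism $\psi\colon\Sigma\to S^n\#\Sigma$, the self-map $\rho\circ q\circ\psi$ of $\Sigma$ has degree one. By the Hopf degree theorem it is homotopic to $\mathrm{id}_\Sigma$, hence $c'\circ\psi=c\circ(\rho\circ q\circ\psi)\simeq c$. This works for any orientation-preserving $\rho$ and any identification.

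Finally, stabilization does not enter through trivializations over the disk. After a homotopy over the contractible disk, any discrepancy is a single diffeomorphism $\phi$ of the fibre. It extends over the trivial piece as $\mathrm{id}_\Sigma\times\phi$, so it does not change the glued bundle. The word ``stably'' appears only because the classification is phrased through $[\Sigma,\cT_\infty]$.
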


\begin{proof}
Without loss of generality we may assume that $\rho$ preserves orientation, that $\rho$ restricts to a diffeomorphism on the upper hemisphere $D_+$ of $S^n$ and that $\rho(S^n \setminus D_+) \subset \Sigma \setminus \rho(D_+)$. Let $D_1=\rho(D_+) \subset \Sigma$, a standard $n$-disk in $\Sigma$. 
Let $D_0 \subset D_1$ be a smaller concentric disk and let $D_2=\Sigma \setminus D_0$. Fix a trivialization $D_2 \times T$ of the restriction of $W$ to $D_2$, which is possible since $D_2$ is contractible. 
Note that the image by $\rho$ of the lower hemisphere $D_-$ of $S^n$ has image contained in $D_2$. 
Taking the composition of $\rho: S^n \to \Sigma$ with the classifying map $\Sigma \to \cT_\infty $ for $W$ we obtain a classifying map $S^n \to \cT_\infty$ for a tube bundle $V \to S^n$. It now follows that the fibre connect sum of $V$ with the trivial tube bundle over $\Sigma$ is stably equivalent to $W$ because stable equivalence classes of tube bundles are classified by $\pi_n \cT_\infty$ and both of these tube bundles have $\alpha$ as their classifying element.
\end{proof}

The decomposition lemma induces an addition formula in framed bordism:

\begin{lemma}\label{lem:trick} Let $\Sigma$ be a homotopy sphere and $W \to \Sigma$ a tube bundle with classifying element $\alpha \in \pi_n \cT_\infty$. For any framing $\fF$ of $\Sigma$ we have $\beta(W, \fF) =D \alpha + (\Sigma,\fF)$ in $\Omega^{fr}_n$. \end{lemma}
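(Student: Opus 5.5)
Our plan is to combine the decomposition of Lemma \ref{lem:div} with the computation of Lemma \ref{lem:thom}, using additivity of $\beta$ under fibre connected sum. Fix an orientation preserving homotopy equivalence $\rho: S^n \to \Sigma$, normalized as in the proof of Lemma \ref{lem:div}. Set $V=\rho^*W \to S^n$; its classifying element is $\alpha$, under our identification of $[\Sigma,\cT_\infty]$ with $\pi_n\cT_\infty$. By Lemma \ref{lem:div}, $W$ is stably equivalent to the fibre connected sum $V \#_f (\Sigma \times T)$, where $\Sigma\times T$ is the trivial tube bundle. The sum is formed over the base connected sum $S^n \# \Sigma \cong \Sigma$. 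Since $\beta(\cdot,\fF)$ depends only on the stable equivalence class, it suffices to compute $\beta$ for this connected sum.

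The key step is additivity. Choose a fibrewise vector field $X_V$ on $V$ satisfying (1), (2) and (3) which is standard over a small disk $B\subset S^n$. Over $B$ it should have exactly one nondegenerate zero per fibre, given by the linear model with trivial negative eigenspace. Choose a fibrewise vector field $X_0$ on $\Sigma \times T$ of the same standard form over a small disk $B'\subset\Sigma$, which we can arrange globally: take $X_0$ to be the constant-in-$\Sigma$ gradient field of a Morse function on $T$ with a single minimum-type zero.

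Performing the base connected sum by removing $B$ and $B'$ and gluing along the collars, the two vector fields agree on the overlap. They therefore glue to a vector field $X$ on $W$ satisfying (1), (2) and (3). The zero set $X^{-1}(0)$ is the connected sum of $X_V^{-1}(0)$ and $X_0^{-1}(0)$, performed along the standard sections over $B,B'$, and framed bordism classes add under such connected sums. For the trivial bundle, $X_0^{-1}(0)$ is a section, which is a copy of $\Sigma$ with framing $\fF$, since $dX_0$ is constant. Thus this summand contributes $(\Sigma,\fF)$.

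For the $V$ summand we need the framing on $S^n$ pulled back from $\fF$ through $\rho$ on the disk region. We handle this by computing on the $S^n$ side with the framing $\fF^0$, relying on the fact that only the part of $\fF$ over $D_1=\rho(D_+)$ enters there. By Lemma \ref{lem:thom}, $\beta(V,\fF^0)=D\alpha$. Alternatively, and more cleanly, we can split the computation via the disks $D_1, D_2$ of Lemma \ref{lem:div} directly. Over $D_2$, where $W$ is trivialized, we put the constant standard field, which yields $(\Sigma,\fF)$. The nontrivial zeros occur over $D_0$, where $\fF$ restricted to a disk is canonical up to homotopy. These zeros then give exactly the Pontryagin–Thom representative of $D\alpha$, by the argument of Lemma \ref{lem:thom} applied to the restriction to a disk relative to its boundary.

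We expect the main obstacle to be this framing bookkeeping. We must check that the framing induced on the zero set lying over $D_0$ is independent of $\fF$ (being a disk) and agrees with the one from $\fF^0$, and that the disjoint-union/connected-sum decomposition of $X^{-1}(0)$ is a framed bordism decomposition. Granting this, we obtain $\beta(W,\fF)=D\alpha+(\Sigma,\fF)$.
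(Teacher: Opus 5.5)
Your proposal is correct and is essentially the paper's own proof. It decomposes $W$ via Lemma \ref{lem:div} as a fibre connected sum of $V=\rho^*W$ with the trivial tube bundle over $\Sigma$, deforms $\fF$ to split as $\fF^0$ on the $S^n$ side and $\fF$ on the $\Sigma$ side, glues fibrewise vector fields that are standard over the attaching disks, uses additivity of framed bordism under connected sum, and concludes with Lemma \ref{lem:thom}.

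One small slip: the standard zero on a tube of index $k$ has a $k$-dimensional negative eigenspace, so your ``minimum-type'' normalization is only right for $k=0$. This is harmless, because the gluing only needs $X_V$ over $B$ and $X_0$ over $B'$ to use the same fixed nondegenerate linear model.
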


\begin{proof}
Let $W \to \Sigma$ be the tube bundle classified by $W$ as the fibre connect sum of a tube bundle $V$ on $S^n$ and the trivial bundle of $\Sigma$ as in Lemma \ref{lem:div}, so that $V$ is classified by the class $ \alpha \in \pi_n \cT_\infty$. We can deform the framing $\fF$ so that under the fibre connect sum it splits as a trivial framing on the $S^n$ component and the same framing $\fF$ on $\Sigma$. Moreover, we may compute $\beta(W,\fF)$ using a fibrewise vector field $X$ on $W$ constructed by fibre connect summing a fibrewise vector field $Y$ on $V$ which is standard over the attaching disk and the constant, standard vector field on the trivial tube bundle over $\Sigma$. We deduce $\beta(W, \fF)=\beta(V, \fF^{0}) + (\Sigma, \fF)$ in $\Omega^{fr}_n$, and $\beta(V, \fF^{0})=D \alpha$ by Lemma \ref{lem:thom}.
\end{proof}


We have achieved the following interpretation of the Waldhausen derivative.

\begin{proposition}\label{prop:bordism}Suppose $\Sigma_0, \Sigma_1$ are homotopy spheres and $\Sigma_0 \subset J^1 \Sigma_1$ a Legendrian embedding admitting a generating function on a tube bundle classified by $\alpha \in \pi_n \cT_\infty$. Then the difference between $\Sigma_0 $ and $\Sigma_1$ in $\Theta_n/bP_{n+1} \subset \text{coker}(J_n)$ is equal to the coset $D \alpha + \text{im}(J_n)$. \end{proposition}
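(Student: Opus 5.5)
The plan is to combine Lemma \ref{lem:rep} with Lemma \ref{lem:trick}, and then pass to $\coker(J_n)$, where the framing ambiguities disappear. Let $f:W \to \bR$ be the generating function on a tube bundle $W \to \Sigma_1$ classified by $\alpha \in \pi_n \cT_\infty$. Fix any framing $\fF$ of $\Sigma_1$; one exists because homotopy spheres are stably parallelizable by \cite{KM}. Let $\fF_f$ be the induced framing on the Legendrian $\Sigma_0 \subset J^1\Sigma_1$.

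First, Lemma \ref{lem:rep} gives $(\Sigma_0, \fF_f) = \beta(W,\fF)$ in $\Omega^{fr}_n$. Second, Lemma \ref{lem:trick} gives $\beta(W,\fF) = D\alpha + (\Sigma_1, \fF)$. Combining the two and using the Pontryagin--Thom isomorphism $\Omega^{fr}_n \simeq \pi_n^s$, we obtain
\[ (\Sigma_0,\fF_f) - (\Sigma_1,\fF) = D\alpha \quad \text{in } \pi_n^s. \]

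Next we project to $\coker(J_n)$. Recall the homomorphism $\Theta_n \to \coker(J_n)$, with kernel $bP_{n+1}$, which sends a homotopy sphere to the framed bordism class of that sphere with any framing. It is well defined because any two framings differ by an element of $\pi_n O$, so the corresponding classes differ by an element of $\text{im}(J_n)$. Applying this to $\Sigma_0$ with the framing $\fF_f$ and to $\Sigma_1$ with the framing $\fF$, the displayed identity shows that the image of $[\Sigma_0]-[\Sigma_1]$ is $D\alpha + \text{im}(J_n)$. Since $\Theta_n/bP_{n+1}$ is identified with its image in $\coker(J_n)$, this is exactly the claim.

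The main point needing care is that $\fF_f$ is a genuine stable framing of the abstract manifold $\Sigma_0$, so that $(\Sigma_0,\fF_f)$ actually represents the class of $\Sigma_0$ in $\coker(J_n)$. This holds because the induced framing is constructed as a trivialization of the stable normal bundle of the fibrewise critical set, which is diffeomorphic to $\Sigma_0$. Beyond this the argument is purely formal. One should also check that the identification $[\Sigma_1,\cT_\infty]\simeq \pi_n\cT_\infty$ used in Lemma \ref{lem:trick} is the same one used to define the classifying element $\alpha$; both come from the orientation-preserving homotopy equivalence $S^n \to \Sigma_1$.
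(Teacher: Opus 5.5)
Your proposal is correct and follows the paper's proof: Lemma~\ref{lem:rep} identifies $(\Sigma_0,\fF_f)$ with $\beta(W,\fF)$, Lemma~\ref{lem:trick} rewrites this as $D\alpha+(\Sigma_1,\fF)$, and passing to $\coker(J_n)$ removes the dependence on the choice of framings. Your extra remarks are sensible bookkeeping that the paper leaves implicit: one is that $\fF_f$ is a genuine stable framing of $\Sigma_0$, and the other is that the identification $[\Sigma_1,\cT_\infty]\simeq\pi_n\cT_\infty$ is used consistently.
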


\begin{proof} 
Let $\fF_1$ be a framing on $\Sigma_1$ and $\fF_0$ the induced framing on $\Sigma_0$. By Lemma \ref{lem:rep} we have $\beta(W,\fF_1)=(\Sigma_0, \fF_0)$ and by Lemma \ref{lem:trick} we have $\beta(W, \fF_1)=D \alpha + (\Sigma_1, \fF_1)$. Hence in $\Theta_n/bP_{n+1}$ we have $[\Sigma_0] = (\Sigma_0, \fF_0) + \text{im}(J_n) = D \alpha + (\Sigma_1, \fF_1) + \text{im}(J_n) = D\alpha + [\Sigma_1]$. \end{proof}

\subsection{Factoring the Waldhausen derivative}
The Waldhausen derivative can be upgraded to a map at the level of spaces $\cT_\infty \to G$, which is defined in the same way, see \cite{W}. Furthermore, it is shown in \cite{W} that the same definition can be made to work in the topological category and one may therefore factor the Waldhausen derivative as a composition $\cT_\infty \to BG \to G$. 

Further, the map $BG \to G$ in this factoring was identified by Waldhausen and Bökstedt \cite{W,BW}. We recall that $\pi_n G \simeq \pi_n^s$ and $\pi_n BG \simeq \pi^s_{n-1}$ for $n \geq 1 $.  

\begin{theorem}[Bökstedt, Waldhausen]\label{prop:factor} For $n \geq 3$ the map $BG \to G$ has its induced map on homotopy groups $\pi_n BG \to \pi_n G$ equal to the map $\pi_{n-1}^s \to \pi_n^s$ given by multiplication with the Hopf element $\eta \in \pi_1^s$. 
\end{theorem}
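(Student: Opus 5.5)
The strategy is to reduce the statement to a universal computation. Since $\pi_n BG \simeq \pi_{n-1}G \simeq \pi_{n-1}^s$ and $\pi_n G\simeq \pi_n^s$, the map $BG \to G$ arising in the factorization of the Waldhausen derivative induces a homomorphism $\phi_n:\pi_{n-1}^s \to \pi_n^s$. The first step is to check that these $\phi_n$ are compatible with the ring structure on $\pi_*^s$. Concretely, $BG\to G$ should be a map of infinite loop spaces, or at least stable, for $n \geq 3$, so that $\phi_n(x) = x\cdot c$ for a single universal class $c \in \pi_1^s$. One would argue this by describing the map $BG \to G$ on a spherical fibration $\xi$ over $S^n$ as a transfer/S-duality operation: to $\xi$ one assigns the stable self-map of the sphere obtained by comparing the fibrewise S-dual of the associated tube (a "topological tube" fibred over $S^n$) with the trivial one. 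This operation is natural under suspension of the base, and since $\xi$ is determined by its clutching element $x \in \pi_{n-1}G$, naturality gives $\phi_n(x)=x\cdot\phi_1(\iota)$ in the stable range. This range is where the hypothesis $n \geq 3$ enters: $BG\to G$ only needs to be a suitably structured map there.

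The second step is to identify $c=\phi_1(\iota)$, or equivalently the image of a generator of $\pi_2 BG \simeq \pi_1^s$ or $\pi_3 BG$, by an explicit low-dimensional computation. I would take the generating spherical fibration, realize it as a topological tube bundle, and compute the derivative directly. For instance, one can take the bundle of rigid tubes coming from the generator of $\pi_2 BO \simeq \mathbf{Z}/2$ (the Möbius-type line bundle pulled back via the Hopf map) and pass it through the square relating $\cT_\infty$, $BG$ and $\Omega^\infty S^\infty$. By Lemma \ref{prop:BO} together with Lemma \ref{lem:thom}, the derivative of a rigid tube bundle is computed by the reflection map $BO\to O$ followed by $J$. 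The composite $\pi_2BO\to\pi_2 O$ (the Bott map $U/O$ picture) followed by $J$ is known to send the generator to $\eta^2$, and this pins down $c=\eta$ after accounting for one degree shift. Since $\pi_1^s=\mathbf Z/2$, it suffices to show that $c\neq 0$, i.e. that the derivative of some specific tube bundle is essential.

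The main obstacle is the first step: establishing rigorously that the topological derivative $BG\to G$ is multiplicative or linear in the required sense. This is precisely the delicate content of Bökstedt–Waldhausen, where it is identified with the "$\eta$-map" built from the composite of the Hopf construction with the loop structure. I would first attempt to model $BG\to G$ as $\Omega^\infty$ of a map of spectra $\Sigma^{-1}\mathbb{S}\wedge X \to \mathbb{S}$-modules, which would make it a module map over $\pi_*^s$. Failing that, one can appeal directly to \cite{BW} for the identification, as the paper does.
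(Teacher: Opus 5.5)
The paper does not prove this statement itself. It quotes \cite{BW}, where $BG\to G$ is identified as an $\eta$-map, i.e.\ precomposition with the Hopf map for the infinite loop structure on $\bZ\times BG$. The statement on $\pi_n$ for $n\geq 3$ then follows at once: there the unstable representative $S^{n+1}\to S^n$ of $\eta$ is a suspension, so precomposing with it on $\pi_nBG\cong\pi_{n-1}G$ is ordinary multiplication by $\eta$. Your fallback to \cite{BW} therefore matches the paper. The self-contained route you sketch, however, has a genuine gap and a concrete error.

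The gap is Step 1. Naturality of the derivative under suspension of the base does not make $\pi_*BG\to\pi_*G$ a map of $\pi_*^s$-modules. A map of spaces only commutes with precomposition by unstable maps of spheres. To conclude $\phi_n(x)=x\cdot c$ you need $BG\to G$ to be the infinite loop map underlying a suitable map of spectra. That is exactly the content of the $\eta$-map identification in \cite{BW}, not a formal consequence of the construction. Also, $\phi_1(\iota)$ is not a meaningful universal class: $\pi_1BG\cong\pi_0G\cong\bZ/2$ consists of the degrees $\pm1$, not $\pi_0^s\cong\bZ$.

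The calibration in Step 2 is wrong. Since $\pi_2O=0$, the composite $\pi_2BO\to\pi_2O\to\pi_2^s$ vanishes; it does not give $\eta^2$. (The generator of $\pi_2BO$ is also not pulled back from a line bundle, since line bundles on $S^2$ are trivial.)

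In fact, Lemmas \ref{prop:BO} and \ref{lem:thom} settle the degree-$2$ case against you. Take the rigid tube bundle over $S^2$ classified by the generator of $\pi_2BO$. Its image in $\pi_2BG\cong\pi_1^s$ is $J_1$ of the generator, i.e.\ $\eta$. Its derivative is $(S^2,\fF^0)=0$, whereas $\eta\cdot\eta=\eta^2\neq 0$. So on $\pi_2$ the map $BG\to G$ is not multiplication by $\eta$. The restriction $n\geq 3$ is therefore essential, and no degree-independent linearity argument like your Step 1 can be correct.

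A valid calibration must be done in degree at least $3$, for example $n=8$. A generator $\gamma$ of $\pi_8BO\cong\bZ$ maps to $\sigma\in\pi_7^s\cong\pi_8BG$. The reflection map sends $\gamma$ to the generator of $\pi_8O\cong\bZ/2$, since the connecting map $\pi_9(U/O)\to\pi_8O$ is onto because $\pi_8U=0$. The $J$-image of that generator is $\eta\sigma\neq 0$. This forces $c=\eta$, but only after linearity in degrees $\geq 3$ has been established, which is the real content of \cite{BW}.
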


In fact, in \cite{BW} it is shown that the map $BG \to G$ is an $\eta$-map, from which Theorem \ref{prop:factor} follows immediately. Let us briefly explain the notion of an $\eta$-map, though this will not be needed in what follows. For $X=\Omega^2Y$ a 2-fold loopspace we may define a map $X \to \Omega X$, i.e. a map $\Omega^2Y \to \Omega^3Y$, by pre-composition with the Hopf fibration $S^3 \to S^2$. Such a map is called an $\eta$-map. The based space of stable spherical fibrations $X=\bZ \times BG$ is an infinite loopspace and there is a homotopy equivalence $i:\Omega B G \to G$, so we have an $\eta$-map $BG \to G$.

The relevant consequence for us is the following, where we return to thinking of the Waldhausen derivative as a map on homotopy groups $D:\pi_n \cT_\infty \to \pi_nG$.

\begin{corollary}\label{cor:eta}
Under the isomorphism $\pi_n G \simeq \pi_n^s$ we have $\text{im}(D) \subset \eta \cdot \pi_{n-1}^s$ for all $n \geq 1$.
\end{corollary}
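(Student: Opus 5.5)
The plan is to factor $D$ through $BG$ and apply Theorem \ref{prop:factor}, treating small $n$ separately.

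As recalled above, the space-level Waldhausen derivative factors as $\cT_\infty \to BG \to G$, where the first map is the forgetful map to topological tubes. On $\pi_n$ this gives
\[ D:\pi_n\cT_\infty \to \pi_n BG \to \pi_n G .\]
So it suffices to show that the image of $\pi_nBG\to\pi_nG$ lies in $\eta\cdot\pi^s_{n-1}$. We need the identification $\pi_nG\simeq\pi_n^s$ here to be the same one used to define $D$ earlier. This holds because the derivative in the topological category restricts to the smooth one, so the composite really is $D$.

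For $n\ge 3$, Theorem \ref{prop:factor} identifies $\pi_nBG\simeq\pi^s_{n-1}\to\pi_nG\simeq\pi_n^s$ with multiplication by $\eta$. Hence $\text{im}(D)\subset\text{im}(\pi_nBG\to\pi_nG)=\eta\cdot\pi^s_{n-1}$.

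For $n=1,2$ we cannot invoke Theorem \ref{prop:factor}, so we argue directly.
\begin{itemize}
\item For $n=1$: $\pi_0^s\simeq\bZ$ is generated by $1$, so $\eta\cdot\pi_0^s=\pi_1^s$ and the inclusion is automatic.
\item For $n=2$: $\pi_1^s\simeq\bZ/2$ is generated by $\eta$, and $\pi_2^s\simeq\bZ/2$ is generated by $\eta^2$. So again $\eta\cdot\pi_1^s=\pi_2^s$ and the inclusion is automatic.
\end{itemize}
These standard stable stems can be read off from \cite{H}.

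The only real subtlety is the compatibility between the topological factorization and the smooth derivative. I would treat this as part of the black box from \cite{W,BW}: the map $\cT_\infty\to G$ of \cite{W} is defined exactly as above and factors through $BG$. Beyond that the argument is formal. Alternatively, $n\le 2$ could be handled by noting that $\pi_n\cT_\infty$ is small in low degrees, but the surjectivity of multiplication by $\eta$ there makes this unnecessary.
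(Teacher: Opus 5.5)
Your proposal is correct and essentially the paper's proof: factor $D$ through $\pi_n BG \to \pi_n G$ via $\cT_\infty \to BG \to G$, apply the B\"okstedt--Waldhausen identification for $n \geq 3$, and note that $\pi_n^s = \eta \cdot \pi_{n-1}^s$ for $n=1,2$. Treating the compatibility of the topological factorization with the smooth derivative as a black box from Waldhausen is also exactly how the paper proceeds.
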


\begin{proof}
The class $D \alpha$ is in the image of $\pi_n BG \to \pi_n G$ since the Waldhausen derivative factors as the composition $\cT_\infty \to BG \to G$. So the conclusion follows from Theorem \ref{prop:factor} when $n \geq 3$ and $\pi_n^s= \eta \cdot \pi_{n-1}^s$ for $n=1,2$.
\end{proof}

\begin{remark}
In \cite{AACK} we construct monoidal models for all the spaces and maps under considerations which allow the factoring $\cT_\infty \to BG \to G$ to be upgraded to a factoring at the level of maps of monoids. This point is crucial and yields the factoring $B(\cT,\mathcal{Q}) \to B(G/O) \to G/O$ which appears in Theorem \ref{thm:general}. No doubt the factoring $\cT_\infty \to BG \to G$ may be further upgraded to a factoring at the level of infinite loop maps, but we do not keep track of the necessary additional structure in \cite{AACK}. 
\end{remark}

\subsection{Legendrian homotopy spheres}

We are now ready to prove Theorem \ref{thm:structural}. The key result is the following:

\begin{theorem}\label{cor:leg}
Let $\Sigma_0$ and $\Sigma_1$ be homotopy spheres. Suppose that there exists a Legendrian embedding $\Sigma_0 \subset J^1\Sigma_1$ which admits a generating function of tube type $f:W \to \bR$  over some tube bundle $W \to M$. Then the difference between $\Sigma_0$ and $\Sigma_1$ in $\Theta_n/bP_{n+1} \subset \text{coker}(J_n)$ lies in the subgroup $\eta \cdot \pi_{n-1}^s + \text{im}(J_n)$.\end{theorem}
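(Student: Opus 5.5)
The plan is to combine Proposition \ref{prop:bordism} with Corollary \ref{cor:eta}; essentially no new geometry is required. Let $\alpha \in \pi_n \cT_\infty$ be the classifying element of the tube bundle $W \to \Sigma_1$ carrying the generating function $f$. Here the base $M$ in the statement is $\Sigma_1$, and $[\Sigma_1,\cT_\infty]$ is identified with $\pi_n\cT_\infty$ via the orientation preserving homotopy equivalence $S^n \to \Sigma_1$, as explained earlier.

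First, Proposition \ref{prop:bordism} applies directly to the Legendrian $\Sigma_0 \subset J^1\Sigma_1$ generated by $f$. It shows that $[\Sigma_0]-[\Sigma_1]$ in $\Theta_n/bP_{n+1}\subset \coker(J_n)$ equals the coset $D\alpha + \text{im}(J_n)$. Recall how this works: pick any framing $\fF_1$ of $\Sigma_1$. Lemma \ref{lem:rep} identifies $(\Sigma_0,\fF_{1,f})$ with $\beta(W,\fF_1)$. Lemma \ref{lem:trick}, which rests on the decomposition Lemma \ref{lem:div} and on Lemma \ref{lem:thom}, gives $\beta(W,\fF_1)=D\alpha+(\Sigma_1,\fF_1)$. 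Reducing modulo $\text{im}(J_n)$ removes the dependence on the choice of framing.

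Second, Corollary \ref{cor:eta} gives $D\alpha \in \eta\cdot \pi_{n-1}^s$ under $\pi_nG\simeq\pi_n^s$. This uses the factorization of the Waldhausen derivative through $BG$ and the Bökstedt--Waldhausen identification (Theorem \ref{prop:factor}) of $\pi_nBG\to\pi_nG$ as multiplication by $\eta$ for $n\ge 3$. For $n=1,2$ one has $\pi_n^s=\eta\cdot\pi_{n-1}^s$ directly, and in any case $\Theta_n=0$ for $n<7$. Hence the coset $D\alpha+\text{im}(J_n)$ lies in $\eta\cdot\pi^s_{n-1}+\text{im}(J_n)$, whose image in $\coker(J_n)$ is $H_n$. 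This is the claim.

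The only point needing care is that the identification $\pi_nG\simeq\pi_n^s$ used in Corollary \ref{cor:eta} must agree with the one used via Pontryagin--Thom in Lemma \ref{lem:thom}. Otherwise $D\alpha$ in Proposition \ref{prop:bordism} would only be known up to sign or up to $\text{im}(J_n)$. Both are the standard isomorphism, and the subgroup $\eta\cdot\pi^s_{n-1}+\text{im}(J_n)$ is closed under sign changes, so this causes no difficulty. The genuine depth of the argument lies entirely in the black-boxed Theorem \ref{prop:factor}.
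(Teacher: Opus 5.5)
Your proposal is correct and is exactly the paper's argument: Proposition~\ref{prop:bordism} identifies the difference of $\Sigma_0$ and $\Sigma_1$ with the coset $D\alpha+\text{im}(J_n)$, and Corollary~\ref{cor:eta} places $D\alpha$ in $\eta\cdot\pi^s_{n-1}$. Your extra remarks are harmless elaborations of points the paper leaves implicit: that the base $M$ is $\Sigma_1$, the low-dimensional cases, and the sign-compatibility of the identifications $\pi_nG\simeq\pi_n^s$.
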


\begin{proof} Let $W \to \Sigma_1$ be classified by $\alpha \in \pi_n \cT_\infty$. By Proposition \ref{prop:bordism}, the difference between $\Sigma_0$ and $\Sigma_1$ in $\Theta_n / bP_{n+1} \subset \text{coker}(J_n)$ is equal to the coset $D \alpha + \text{im}(J_n)$, and by Corollary \ref{cor:eta} $D \alpha$ is a multiple of $\eta$. \end{proof}

To apply the result in the case of a Lagrangian homotopy sphere we only need to invoke the existence theorem for generating functions of tube type. 

\begin{proof}[Proof of Theorem \ref{thm:structural} ] Suppose that $\Sigma_0,\Sigma_1$ are homotopy spheres. If there exists a Lagrangian embedding $\Sigma_0 \subset T^*\Sigma_1$, then by Theorem \ref{thm:existence} it admits a generating function of tube type on some tube bundle. We apply Theorem \ref{cor:leg} to get the desired result. \end{proof}


\bibliographystyle{amsplain}
\bibliography{references}

@article{A,
  title={Framed bordism and {L}agrangian embeddings of exotic spheres},
  author={Abouzaid, Mohammed},
  journal={Annals of Mathematics},
  pages={71--185},
  year={2012},
  publisher={JSTOR}
}

@article{ACGK,
  title={Twisted generating functions and the nearby {L}agrangian conjecture},
  author={Abouzaid, Mohammed and Courte, Sylvain and Guillermou, St{\'e}phane and Kragh, Thomas},
  journal={Duke Mathematical Journal},
  volume={174},
  number={5},
  pages={949--1011},
  year={2025},
  publisher={Duke University Press}
}

@article{AACK,
  title={Normal invariant of nearby {L}agrangians via twisted derivative},
  author={Abouzaid, Mohammed and {\'A}lvarez-Gavela, Daniel and Courte, Sylvain and Kragh, Thomas},
  journal={arXiv preprint arXiv:2505.12515},
  year={2025},
  pages={}
}

@inproceedings{B,
  title={The rational homotopy type of ${\Omega}${W}h{D}iff},
  author={B{\"o}kstedt, Marcel},
  booktitle={Algebraic Topology Aarhus 1982: Proceedings of a conference held in Aarhus, Denmark, August 1--7, 1982},
  pages={25--37},
  year={2006},
  organization={Springer}
}

@misc{H,
  author = {Hatcher, Allen},
  title = {Pictures of stable homotopy groups of spheres.},
  howpublished = {https://pi.math.cornell.edu/~hatcher/stemfigs/stems.pdf},
}

@inproceedings{BW,
  title={The map {$BSG \to A(^\ast) \to QS^0$}},
  author={B{\"o}kstedt, Marcel and Waldhausen, Friedhelm},
  booktitle={Algebraic topology and algebraic K-theory: proceedings of a Conference October 24-28, 1983},
  year={1987}
}

@article{EKS,
  title={Lagrangian exotic spheres},
  author={Ekholm, Tobias and Kragh, Thomas and Smith, Ivan},
  journal={Journal of Topology and Analysis},
  volume={8},
  number={03},
  pages={375--397},
  year={2016},
  publisher={World Scientific}
}

@article{ES,
  title={Exact {L}agrangian immersions with a single double point},
  author={Ekholm, Tobias and Smith, Ivan},
  journal={Journal of the American Mathematical Society},
  volume={29},
  number={1},
  pages={1--59},
  year={2016}
}

@article{EM,
  title={Wrinkling of smooth mappings-{II} {W}rinkling of embeddings and K. Igusa’s theorem},
  author={Eliashberg, YM and Mishachev, NM},
  journal={Topology},
  volume={39},
  number={4},
  pages={711--732},
  year={2000},
  publisher={Elsevier}
}

@inproceedings{G,
  title={Formes g{\'e}n{\'e}ratrices d’immersions lagrangiennes dans un espace cotangent},
  author={Giroux, Emmanuel},
  booktitle={G{\'e}om{\'e}trie Symplectique et M{\'e}canique: Colloque International La Grande Motte, France, 23--28 Mai, 1988},
  pages={139--145},
  year={2006},
  organization={Springer}
}

@article{KA,
  title={Parametrized ring-spectra and the nearby {L}agrangian conjecture},
  author={Kragh, Thomas},
  journal={Geometry \& Topology},
  volume={17},
  number={2},
  pages={639--731},
  year={2013},
  publisher={Mathematical Sciences Publishers}
}

@article{K,
  title={Generating {F}unctions in $\mathbb{R}^{2n}$ and the {H}atcher-{W}aldhausen map},
  author={Kragh, Thomas},
  journal={arXiv preprint arXiv:1804.02557},
  year={2018},
  pages={}
}

@article{KM,
  title={Groups of homotopy spheres: I},
  author={Kervaire, Michel A and Milnor, John W},
  journal={Annals of mathematics},
  volume={77},
  number={3},
  pages={504--537},
  year={1963},
  publisher={JSTOR}
}

@article{L,
  title={Formes diff{\'e}rentielles de degr{\'e} 1 ferm{\'e}es non singuli{\`e}res: classes d'homotopie de leurs noyaux},
  author={Laudenbach, F},
  journal={Commentarii Mathematici Helvetici},
  volume={51},
  number={1},
  pages={447--464},
  year={1976},
  publisher={Springer}
}

@article{PS,
  title={Bordism of flow modules and exact {L}agrangians},
  author={Porcelli, Noah and Smith, Ivan},
  journal={arXiv preprint arXiv:2401.11766},
  year={2024},
  pages={}
}

@article{PS2,
  title={Bordism from quasi-isomorphism},
  author={Porcelli, Noah and Smith, Ivan},
  journal={arXiv preprint arXiv:2509.21587},
  year={2025},
  pages={}
}

@book{R,
  title={Complex cobordism and stable homotopy groups of spheres},
  author={Ravenel, Douglas C},
  year={2003},
  publisher={American Mathematical Soc.}
}

@incollection{W,
  title={Algebraic {K}-theory of spaces: a manifold approach},
  author={Waldhausen, Friedhelm},
  booktitle={Current trends in algebraic topology},
  volume={1},
  year={1982},
  publisher={}
}

\end{document}